\begin{document}

\title{Existence of Rotating magnetic stars}
\author{Juhi Jang}
\address{Department of Mathematics, University of Southern California, Los Angeles, CA 90089, USA and Korea Institute for Advanced Study, Seoul, Korea}
\author{Walter A. Strauss} 
\address{Department of Mathematics and Lefschetz Center for Dynamical Systems, Brown University, Providence, RI 02912}
\author{Yilun Wu}
\address{Department of Mathematics, University of Oklahoma, Norman, OK 73069}
\date{\today}
\maketitle

\newtheorem{corollary}{Corollary}
\newtheorem{proposition}{Proposition}
\newtheorem{theorem}{Theorem}[section]
\newtheorem{lemma}{Lemma}[section]
\newtheorem*{exer}{Exercise}
\newtheorem{prop}{Proposition}[section]
\newtheorem{remark}{Remark}
\newtheorem{assump}{Assumption}
\newtheorem*{claim}{Claim}

\newcommand{\real}{\mathbb{R}}
\newcommand{\gz}{g_{\zeta}}
\newcommand{\gzone}{g_{\zeta_1}}
\newcommand{\gztwo}{g_{\zeta_2}}
\newcommand{\tr}{\textnormal{ tr}}
\newcommand{\vel}{\mathbf{v}}

\newcommand{\e}{\varepsilon}
\def\eqn {\begin{equation}}
\def\eeqn {\end{equation}}
\def\C{{\mathbb C}}
\def\real{{\mathbb R}}
\def\R{\real}
\def\Z{{\mathbb Z}}
\def\g{\kappa}  
\def\ep{\epsilon}
\def\th{\theta}
\def\lb{\lambda}
\def\al{\alpha}
\def\va{\varphi}
\def\pa{\partial}
\def\nb{\nabla}
\def\A{\mathcal A}
\def\B{\mathcal B}
\def\C{\mathcal C}
\def\H{\mathcal H}
\def\J{\mathcal J}
\def\G{\mathcal G}
\def\F{\mathcal F}
\def\K{\mathcal K}
\def\L{\mathcal L}
\def\M{\mathcal M}
\def\O{\mathcal O}
\def\P{\mathcal P}
\def\S{\mathcal S}
\def\W{\mathcal W}
\def\ka{\kappa}
\def\LL{\langle}
\def\RR{\rangle}

\begin{abstract}
We consider a star as a compressible fluid subject to gravitational and magnetic forces.  
This leads to an Euler-Poisson system coupled to a magnetic field,  
which may be regarded as an MHD model together with gravity. 
The star executes steadily rotating motion about a fixed axis.  
We prove, for the first time, the existence of such stars provided that the rotation speed 
and the magnetic field are sufficiently small. 
\end{abstract}

\section{Introduction} 

There have been extensive mathematical studies of stars subject only to gravitational forces but very 
few that incorporate magnetic forces.  
The only study that we are aware of is \cite{FLS}, in which the star does not rotate.  
It is well known that magnetic forces have major physical effects, for instance in the reconnection 
phenomenon of solar flares.  
Stellar magnetism is a very active area of physical theory \cite{Mestel, Shapiro}, typically modeled by MHD,  
as well as of observation \cite{GST}.  
Because it is rare for stars to have a net charge, it is frequently assumed that the electric field vanishes.  

Our model consists of the steady compressible Euler equations together with gravity and magnetic terms.  
It is as follows.  
\eqn\label{masscons}   \nb\cdot(\rho v)=0  \eeqn
\eqn \label{eq: 2}  \rho (v\cdot\nb) v + \nb p = \rho\nb U + (\nb\times B)\times B  \eeqn
\eqn\label{eq: 3}   \nb\times(v\times B)=0   \eeqn
\eqn\label{eq: 4}   \nb\cdot B = 0  \eeqn
\eqn \label{eq: 5}  \Delta U = -4\pi\rho   \eeqn 
Equations \eqref{masscons}-\eqref{eq: 3} should hold in the fluid domain $\{\rho>0\}$, while \eqref{eq: 4} 
and \eqref{eq: 5} should hold in all of $\mathbb R^3$. For simplicity, the 
magnetic permeability is set equal to 1 throughout $\mathbb R^3$, although 
more realistically it could differ inside and outside the star. 
We further impose the boundary conditions 
$\lim_{|x|\to\infty}U(x)=0$, an equation of state $p=p(\rho)$, and 
\eqn\label{cond: vac bdry}
p=0 \text{ on the set }\pa\{\rho>0\}.
\eeqn

The first two equations express mass and momentum conservation.  
The magnetic force is $J\times B$, where $J = \nb\times B$ 
(from Amp\`ere's Law in Maxwell's equations without $E$) 
is the magnetic current, omitting the usual $4\pi$ factor.  
The third equation comes from Faraday's Law in Maxwell's equations, where 
the electric field and the conductivity have been neglected due to the large length scale in astrophysics.  
The fourth equation is one of Maxwell's equations and the fifth is gravity.  

We assume a steady rotation $v = \omega r e_\theta$ around the $x_3$ axis, 
where $\omega$ is a constant rotation speed 
and $e_\theta = (-\sin\theta, \cos\theta, 0)$ in cylindrical coordinates.  
Then \eqref{masscons} is satisfied and $(v\cdot\nb) v = -\omega^2 \nb(r^2)/2$ where $r^2=x_1^2+x_2^2$.  
Furthermore an equation of state is assumed: $p$ is a function of $\rho$.  
For instance, we allow $p(\rho) = \rho^\gamma$ with $\frac65<\gamma<2, \gamma\ne\frac43$.  
The specific enthalpy is defined as 
\eqn \label{enthalpy} h(\rho) = \int_0^\rho \frac{p'(s)}{s}\ ds.  \eeqn  
We will show in Section 2 that, due to the cylindrical symmetry, there is a scalar function $\psi$ 
such that $rB^r = \pa_{3}\psi,  \ rB^3 = -\pa_r\psi$.  

Under these conditions we will show in Section 2 that the system reduces to the three equations   
\eqn \label{7}   -\frac12 \omega^2 r^2 +h(\rho) - U + \ep K(\psi) = \text{ constant\quad in } \{\rho>0\},  \eeqn
\eqn \label{8}   L\psi = \ep k(\psi) \rho \quad\text{ in } \real^3,  \eeqn
\eqn  \label{9}  U = |x|^{-1} * \rho  \quad\text{ in } \real^3,  \eeqn
where $L = \nb\cdot r^{-2}\nb$ and where $k=K'$ is an arbitrary function.  
We call $k$ the magnetic current function, because it takes the magnetic potential $\psi$ 
to a multiple of the magnetic current $J=\nb\times B$ (see Section 2).

We will prove the existence of solutions by a perturbation analysis starting from a spherically symmetric 
stationary solution.  
The tool we employ is the standard implicit function theorem in Banach space.  

For any radius $R>0$, it is well-known \cite {SW2017} that there exists a unique spherical 
solution  $\rho_0(|x|)\ge0$, called the Lane-Emden solution, 
with $\omega=\ep=0$ and $\psi\equiv0$ such that 
$\rho_0>0$ in $B_R=\{|x|<R\}$ and $\rho_0\in C^2(B_R)\cap C^{1,\alpha}(\real^3)$, 
where $\alpha=\min\left(\frac{2-\gamma}{\gamma-1},1\right)$.  
Our main theorem is as follows.  

\begin{theorem}
Let $R>0$.  Let $\rho_0$ be the unique solution mentioned above.  Let $k\in C^2(\real)$.  
Let $p(\rho)=\rho^\gamma$ where $6/5 < \gamma < 2$ and $\gamma\ne 4/3$.     
Then there exist $\bar\omega>0$ and $\bar\ep>0$ 
and solutions $(\rho=\rho_{\omega,\ep}, \psi=\psi_{\omega,\ep})$ 
for all $|\omega|<\bar\omega$ and $|\ep|<\bar\ep$, 
with the following properties.  

$\bullet$  $\rho\in C_c^{1,\alpha}(\real^3), \psi\in C^{3,\alpha}(\real^3) $, where $\alpha = \min\left(\frac{2-\gamma}{\gamma-1},1\right)$. 

$\bullet$ Both functions are axisymmetric and even in $x_3$.  

$\bullet$ $\rho\ge0$ has compact support (near $B_R$).  

$\bullet$ $\int \rho\,dx = M(\rho_0)$ (independently of $\omega,\ep$).

$\bullet$ The mapping $(\omega,\ep) \to  (\rho, \psi)$  is continuous from 
$(-\bar\omega,\bar\omega) \times (-\bar\ep,\bar\ep)$ into 
$C^1(\overline{B_{2R}})\times C^2_0(\real^3)$. 

More generally, we permit $p(\rho)$ to be any function that satisfies \eqref{cond: p1} and \eqref{cond: p2} below   
and we assume that $M'(\rho_0(0))\ne0$, where $M(\rho(0)) = \int_{\real^3} \rho~dx$ is the total mass of the unique radial nonrotating star solution with center density $\rho(0)$ (more details explained in Theorem 2.1 in \cite{SW2017}). 
\begin{align}\label{cond: p1}
p(s)\in C^3(0,\infty), p'>0, p(0)=0
\end{align}
\begin{align}\label{cond: p2}
&\exists \gamma\in (1,2), \lim_{s\to 0^+}s^{3-\gamma}p'''(s)<0, \text{ and }\\
&\exists \gamma^*\in (\frac65,2), \lim_{s\to\infty} s^{1-\gamma^*}p'(s)>0.\notag
\end{align}

\end{theorem}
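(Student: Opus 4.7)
The plan is to apply the implicit function theorem in Banach spaces to a reformulation of \eqref{7}--\eqref{9}, perturbing from the Lane-Emden base state $(\rho_0, U_0, \psi \equiv 0)$ at $\omega = \epsilon = 0$. First I would use \eqref{7} to eliminate the density algebraically. Under \eqref{cond: p1}--\eqref{cond: p2} the specific enthalpy $h$ is a strictly increasing $C^1$ bijection of $[0,\infty)$ onto its image, so set
\[
\rho = g(\Phi), \qquad \Phi := U - \tfrac12\omega^2 r^2 - \epsilon K(\psi) + c,
\]
where $g = h^{-1}$ on $[0,\infty)$ and $g \equiv 0$ on $(-\infty, 0]$. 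The system then becomes a single equation $F(U, \psi, c; \omega, \epsilon) = 0$ with
\[
F = \bigl(\,U - |x|^{-1} * g(\Phi),\ L\psi - \epsilon k(\psi)\,g(\Phi),\ \textstyle\int_{\mathbb R^3} g(\Phi)\,dx - M(\rho_0)\,\bigr),
\]
posed on the subspaces of axisymmetric, $x_3$-even functions in $C^{1,\alpha}(\overline{B_{2R}})$ for $U$ (with its harmonic extension outside), in $C^{3,\alpha}(\mathbb R^3)$ with decay at infinity for $\psi$, and in $\mathbb R$ for $c$. By construction, the base point $(U_0, 0, c_0; 0, 0)$ satisfies $F = 0$.

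The crux is to show that $D_{(U,\psi,c)} F$ at the base point is an isomorphism. Since $\psi_0 \equiv 0$ and $\epsilon_0 = 0$, the mixed derivatives involving $\psi$ in $F_1, F_3$ vanish and the mixed derivative in $F_2$ collapses, so the linearization splits into two decoupled blocks. The magnetic block is just the operator $L = \nabla\cdot r^{-2}\nabla$ acting on $\dot\psi$; restricted to axisymmetric functions $\dot\psi(r, z)$, this is equivalent (up to the factor $r^{-2}$) to the Grad-Shafranov operator $E^2 = \partial_r^2 - r^{-1}\partial_r + \partial_z^2$, whose explicit Biot-Savart type Green's function yields an isomorphism $C^{3,\alpha} \to C^{1,\alpha}$ on the $x_3$-even axisymmetric sector. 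The density-potential block is the compact perturbation $I - T$ of the identity, $T\dot U := |x|^{-1}*(g'(U_0 + c_0)\dot U)$, together with the mass constraint that pins down $\dot c$; this is precisely the linearization analyzed in \cite{SW2017}. Its invertibility on axisymmetric $x_3$-even functions proceeds via spherical-harmonic decomposition: the nonzero angular modes are controlled by a nondegenerate spectral analysis inherited from the Lane-Emden profile, and the invertibility of the radial ($\ell = 0$) mode is equivalent to the hypothesis $M'(\rho_0(0)) \neq 0$ (automatic for the power law $\gamma \in (6/5, 2) \setminus \{4/3\}$). This is the main obstacle, and it is essentially inherited from \cite{SW2017}.

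Once both blocks are inverted, the implicit function theorem produces a continuous family $(U_{\omega,\epsilon}, \psi_{\omega,\epsilon}, c_{\omega,\epsilon})$ for $|\omega|, |\epsilon|$ small, and I would set $\rho_{\omega,\epsilon} := g(\Phi_{\omega,\epsilon})$. The listed properties then follow without serious further work. Axisymmetry and $x_3$-evenness hold because we worked in that invariant subspace. Compact support of $\rho$ near $B_R$ holds because $g \equiv 0$ on $(-\infty, 0]$ and $\Phi_{\omega,\epsilon}$ is a small $C^1$ perturbation of $\Phi_0 = U_0 + c_0$, which is positive exactly in $B_R$; the Kelvin decay of $U$ and $\psi$ together with the $-\tfrac12\omega^2 r^2$ term ensure negativity outside a large ball. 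The mass identity is the third component of $F = 0$. The Hölder regularity $\rho \in C_c^{1,\alpha}$ and $\psi \in C^{3,\alpha}$ follows by bootstrapping: $\Phi_{\omega,\epsilon}$ is $C^{2,\alpha}$ in the interior, composed with $g$ whose Hölder exponent at the origin is exactly $\alpha = \min\bigl(\tfrac{2-\gamma}{\gamma-1}, 1\bigr)$, and $\psi$ gains two derivatives from inverting $L$ against a $C^{1,\alpha}$ source.
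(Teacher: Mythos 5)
Your route is genuinely different from the paper's. The paper never eliminates the density: it works in Lagrangian-type variables, writing $\rho_\zeta=\M(\zeta)\,\rho_0\circ\gz^{-1}$ for a deformation $\gz(x)=x(1+\zeta(x)/|x|^2)$ of the Lane--Emden support, pulls the magnetic potential back to $\varphi=\psi\circ\gz$, and applies the implicit function theorem to $\F(\zeta,\varphi,\omega,\ep)$ on the space $X$ of axisymmetric, $x_3$-even functions with $|\nabla f(x)|\le C|x|$. The magnetic equation is kept in fixed-point form $\varphi-\ep L^{-1}(\cdots)=0$, so its linearization at the base point is the identity and no invertibility of $L$ on Hölder spaces is needed; the mass normalization is built into $\M(\zeta)$ rather than imposed as a constraint with a Lagrange-type constant $c$. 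The payoff of the paper's formulation is that compact support, the free-boundary regularity of $\rho$, and the mass identity are automatic, at the price of heavy work (Sections 3--4) on the Fr\'echet differentiability of compositions with $\gz^{-1}$ and on the operator $L^{-1}$ realized through the five-dimensional Laplacian. Your enthalpy formulation $\rho=g(\Phi)$ is the Lichtenstein--Heilig route, closer to \cite{jang2016slowly} than to \cite{SW2017}, and it trades those difficulties for others: you must confine the support of $g(\Phi)$ to a fixed compact set before the convolution $|x|^{-1}*g(\Phi)$ is even well defined, and you must establish continuous Fr\'echet differentiability of the Nemytskii map $\Phi\mapsto g(\Phi)$ in H\"older norms when $g'$ is only $C^{0,\alpha}$ --- the same loss-of-exponent phenomenon the paper confronts in Lemma 3.6 and its remark.

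The concrete gap is your treatment of the linearized density block. The operator $I-T$ with $T\dot U=|x|^{-1}*(g'(\Phi_0)\dot U)$, bordered by the unknown constant $\dot c$ and the mass constraint, is \emph{not} ``precisely the linearization analyzed in \cite{SW2017}'': Theorem 4.1 there concerns the operator $\partial\F_1/\partial\zeta(0,0,0,0)$ acting on deformations $\zeta\in X$, a different operator on a different space, and its invertibility does not transfer to $I-T$ without a separate argument. You would have to redo the kernel analysis for $I-T$: decompose into spherical harmonics, verify that the $\ell=1$ kernel generated by translations ($\partial_i U_0$) is excluded by axisymmetry and $x_3$-evenness, show the $\ell\ge2$ modes are nondegenerate, and check that the bordered $2\times2$ system in $(\dot U,\dot c)$ is invertible exactly when $M'(\rho_0(0))\ne0$ (this is where $\gamma=4/3$ fails). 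That analysis exists in the literature (\cite{jang2016slowly}), but as written your proposal cites the wrong result for the step on which the whole theorem turns, so the crux is asserted rather than proved.
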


\bigskip 
Our construction shows that the solutions are modified from the Lane--Emden solution by a simple 
radial stretching or contraction. The support of $\rho_{\omega,\epsilon}$ takes an oblate shape, 
as we remark at the end of Section 2. The shape is only affected by the magnetic field at higher 
orders in $\omega^2$ and $\epsilon$. 
We remark that the case $\gamma=\frac43$ is excluded from Theorem 
1.1, because in that case the key linearized
operator in our construction has a non-trivial kernel. This corresponds 
to the fact that there exists a
family of {\it non-rotating} radial solutions with zero magnetic field, 
obtained by simple rescaling of an
unperturbed one. The solutions in this family all have the same total 
mass, due to the special scaling
symmetry in this case. With the mass constraint in Theorem 1.1, 
the nearby solutions must come from
this trivial class. However, if we were to remove the mass constraint, 
non-trivial solutions also could arise at the
$\frac43$ power.

There have been many studies, 
including by the giants MacLaurin, Jacobi, Poincar\'e, Liapunov and Chandrasekhar, 
of stationary and steadily rotating stars subject to gravitational forces but without any magnetic field.  
There are two modern methods of analysis of rotating stars, the variational method introduced by 
Auchmuty and Beals \cite{auchmuty1971variational} and Li \cite{li1991uniformly} and the perturbation method introduced by 
Lichtenstein \cite{lichtenstein1933untersuchungen}.  
The perturbation method was recently revived and further developed 
in \cite {SW2017} and \cite {jang2016slowly}, where further references and discussions may be found.   
Furthermore, the two papers \cite{JM2} and \cite{SW2018} appeared after this paper 
was originally submitted. 
There are a number of excellent general expositions, notably the treatises 
\cite{Chandrasekhar}, \cite{Tassoul} and \cite{jardetzky2013theories}.  

However, the only mathematical reference of which we are aware that deals with a magnetic effect is \cite{FLS}, 
which considers a stationary $(v=0)$ magnetic star.  The authors of \cite{FLS} find solutions by a variational method 
and permit $\gamma\ge2$, but they require $k$ to be a constant function of $\psi$. 
Our paper is very different from \cite{FLS} with regard to its methodology and most importantly 
with regard to the rotation of the star.  
Besides permitting rotation, we use a perturbation method and we permit the magnetic current function 
$k(\psi)$ to be completely arbitrary rather than a constant.  
To the best of our knowledge, ours is the first mathematically rigorous result that establishes the existence of rotating magnetic stars.   

In Section 2 we state the assumptions in detail, specialize the model to our situation, and outline the 
proof of the theorem.  Section 3 is devoted to studying the detailed properties of the inverse operator $L^{-1}$.  
Section 4 is devoted to the proof of Fr\'echet differentiability, which is a key requirement of the implicit 
function theorem.  

\section{Setup and outline}

We are looking for axisymmetric steady rotating solutions to the magnetic star equations \eqref{masscons}-\eqref{eq: 5}.  
To that end, let $v=r\omega e_\theta$, $B=B^re_r + B^3e_3$, and assume that all the functions $\rho, B, U$ 
depend only on the cylindrical coordinates $r$ and $x_3$. 
The magnetic star equations simplify considerably under the aforementioned assumptions. 
First of all, by these assumptions we have $\nabla\cdot v=0$, $\nabla \rho\perp e_\theta$, and $v\parallel e_\theta$. As a consequence, the mass conservation equation \eqref{masscons} is automatically satisfied. Equation \eqref{eq: 3} is also satisfied as is seen from the following calculation:
\eqn
\nabla\times(v\times B)=-\nabla_vB+\nabla_Bv=-\omega B^re_\theta+\omega B^re_\theta=0. \eeqn 
Because there is no $\theta$-dependence, equation \eqref{eq: 4} gives us 
\eqn\label{eq: div free}
0  =  r\nb\cdot B  =  \partial_r(rB^r)+ \partial_3(rB^3), \eeqn 
which is 
satisfied if the components of the magnetic field are induced by a scalar axisymmetric 
`magnetic potential' $\psi$ in the following way: 
\eqn 
rB^r=\pa_3\psi, \; rB^3=-\pa_r\psi .  
\eeqn  
This is equivalent to assuming that the vector magnetic potential $A$, given by $B=\nabla\times A$, 
is  $A = -\frac{\psi}r e_\theta$.   

Our next step is to express the term $(\nabla\times B)\times B$ in \eqref{eq: 2} by $\psi$. 
A short computation shows that 
\eqn
\nabla\times B = (\partial_3B^r-\partial_rB^3)e_\theta = r(L\psi)e_\theta,\eeqn
where
\eqn
L\psi =\frac1r\partial_r \left(\frac{\partial_r\psi}{r}\right)+\frac{\partial_3^2\psi}{r^2} 
= \nabla\cdot\left(\frac1{r^2}\nabla \psi\right).\eeqn
So 
\eqn
(\nabla\times B)\times B = (L\psi)e_\theta\times(\pa_3\psi e_r-\pa_r\psi e_3)=-(L\psi)\nabla \psi.\eeqn
Because  $v\cdot \nabla v = -\nabla (\frac12 \omega^2 r^2)$ 
and $\frac{\nabla p}{\rho}=\nabla(h(\rho))$ from \eqref{enthalpy}, 
the momentum equation \eqref{eq: 2} becomes
\eqn\label{eq: 19}
\nabla (-\tfrac12\omega^2 r^2 + h(\rho)) = \nabla U - \tfrac1\rho (L\psi)\nabla \psi  \eeqn
Notice that every term but the last one in \eqref{eq: 19} is a gradient. 
It follows that the last term must be curl free, namely,  
\eqn\label{eq: curl free}
\nabla \left(\frac{L\psi}{\rho}\right)\times \nabla \psi =0.\eeqn 
Thus the gradients of $\frac{L\psi}{\rho}$ and $\psi$ are parallel. A natural 
sufficient condition for this is
that $\frac{L\psi}{\rho}$ is a function of $\psi$. Motivated by this condition, 
we look for a special but quite wide class of
solutions to \eqref{eq: 19} for which $L\psi=\epsilon\rho k(\psi)$ with an 
arbitrarily prescribed function $k$. The
constant $\epsilon$ is conveniently included here as a small parameter. 
For these solutions, the momentum equation \eqref{eq: 19} can now be written as
\eqn
h(\rho)-\tfrac12\omega^2r^2-U + \ep K(\psi)=\text{ constant}, 
\eeqn
where $K(s)=\int_0^s k(t)~dt$. 

To summarize, the magnetic star equations have now been simplified to the following problem:
\begin{align}
h(\rho)-\frac12\omega^2r^2-U + \ep K(\psi)&= \text{ constant\quad in the region  } \{\rho>0\},  \label{eq: new system 1} \\
L\psi &= \ep\rho k(\psi) \quad \text{ in }\real^3,\label{eq: new system 2}\\
U &= \rho*\frac1{|x|} \quad \text{ in }\real^3.\label{eq: new system 3}
\end{align}
The last equation comes from \eqref{eq: 5}, 
together with the assumption that $\rho(x)$ vanishes appropriately at infinity.  
Thus we have \eqref{7}, \eqref{8}, \eqref{9}. 
We also assume the boundary condition $\psi(\infty)=0$. Solutions of \eqref{eq: new system 1}-\eqref{eq: new system 3} together with the boundary condition \eqref{cond: vac bdry} satisfy 
our original system \eqref{masscons}-\eqref{cond: vac bdry}.  The rest of the paper is devoted to the 
existence of these solutions, thereby proving Theorem 1.1.

We will construct solutions to \eqref{eq: new system 1}, \eqref{eq: new system 2}, \eqref{eq: new system 3} 
which are close to the nonrotating, magnetic-free Lane--Emden solutions. 
We thus begin by considering a Lane--Emden solution $\rho_0$ supported on $B_1$, 
as is explained in \cite{SW2017}, and the deformation 
\eqn
\gz(x) = x\left(1+\frac{\zeta(x)}{|x|^2}\right) 
\eeqn  
used in \cite{SW2017}. 
Here $\zeta: B_1\to \real$ is an axisymmetric function that is even in $x_3$.  
If $\zeta$ is small in a suitable norm, $\gz$ is invertible, $\zeta$ can be 
extended to $\real^3$ preserving the symmetry requirements.
The deformation $\gz$ can then be extended to a homeomorphism on $\real^3$ 
(as well as diffeomorphic on $\real^3-\{0\}$) accordingly. See \cite{SW2017} for detailed 
estimates of these facts. We look for a solution of the form
\eqn\label{eq: 45}
\rho_\zeta(z) = \M(\zeta)\rho_0(\gz^{-1}(z)),  \eeqn   
where $\M(\zeta)$ is chosen such that $\rho_\zeta(x)$ has the same mass as $\rho_0(x)$.

Our model \eqref{eq: new system 1}, \eqref{eq: new system 2}, \eqref{eq: new system 3} 
may thus be recast as the pair of equations 
\begin{align}
&\left(\rho_\zeta*\frac{1}{|\cdot|}\right)(z) - \left(\rho_\zeta*\frac{1}{|\cdot|}\right)(0)  
+ \tfrac12\omega^2 (z_1^2+z_2^2) \notag\\
&\quad-h(\rho_\zeta(z))+h(\rho_\zeta(0))- \ep K(\psi(z))+\ep K(\psi(0))=0  
~\text{ for }z\in \gz(\overline{B_1}),\label{eq: 46}\\
&\psi(z) -  \ep L^{-1}(\rho_\zeta k(\psi))(z)=0 ~\text{ for all }z\in \real^3. \label{eq: 47}
\end{align}
The precise definition and properties of $L^{-1}$ are given in Section \ref{sec: background}. 

We reduce the problem further by observing that \eqref{eq: 47} only needs to be solved 
for $z\in \gz(\overline{B_1})$.     Indeed, as $\rho_\zeta$ is supported on $\gz(\overline{B_1})$, 
if we can find a smooth enough function $\psi : \gz(\overline{B_1})\to\real $ 
for which \eqref{eq: 47} holds for all $z\in \gz(\overline{B_1})$, 
then we can extend $\rho_\zeta k(\psi)$ to $\real^3$ by setting it to be zero outside $\gz(\overline{B_1})$.   
Now we extend $\psi$ to $\real^3$ by \eqref{eq: 47} and observe that \eqref{eq: 47} 
holds for all $z\in \real^3$.   In summary, when solving \eqref{eq: 46} and \eqref{eq: 47}, 
we only need them to hold for $z\in \gz(\overline{B_1})$. 

In order that the functions are defined on a fixed domain, 
we make the change of variable  $z=\gz(x)$, and and replace $\psi$ by the function
\eqn
\varphi(x)=\psi(\gz(x)).    \eeqn
It follows that \eqref{eq: 46}, \eqref{eq: 47}   can be written as 
\begin{align}
&\left(\rho_\zeta*\frac{1}{|\cdot|}\right)(\gz(x)) - \left(\rho_\zeta*\frac{1}{|\cdot|}\right)(0)  
+\tfrac12\omega^2 (x_1^2+x_2^2)\left(1+\frac{\zeta(x)}{|x|^2}\right)^2 \notag\\
&\qquad-h(\M(\zeta)\rho_0(x))+h(\M(\zeta)\rho_0(0))- \ep K(\varphi(x))+\ep K(\varphi(0))=0, \\
&\varphi(x) -  L^{-1}(\rho_\zeta \ep k(\varphi(\gz^{-1})))(\gz(x))=0,  
\end{align}
where both equations are now required to be valid only in $\overline{B_1}$. 

Now we begin to set up the scenario for the implicit function theorem.  We define the operators 
\begin{align}
\F_1(\zeta,\varphi,\omega,\epsilon)(x) &= \left(\rho_\zeta*\frac{1}{|\cdot|}\right)(\gz(x)) - \left(\rho_\zeta*\frac{1}{|\cdot|}\right)(0)\notag\\
&\qquad+\tfrac12\omega^2 (x_1^2+x_2^2)\left(1+\frac{\zeta(x)}{|x|^2}\right)^2 \notag\\
&\qquad-h(\M(\zeta)\rho_0(x))+h(\M(\zeta)\rho_0(0))- \epsilon K(\varphi(x))+\epsilon K(\varphi(0)),\\
&\F_2(\zeta,\varphi,\omega,\epsilon)(x) =\varphi(x) - \epsilon L^{-1}(\rho_\zeta k(\varphi(\gz^{-1})))(\gz(x))  
\end{align} 
for $x\in\overline B_1$.    We define $\F=(\F_1,\F_2)^t$. 
Note that 
$\F(0,0,0,0)=0$ because $\rho_0$ is the Lane--Emden solution in $B_1$.  

Let $\dot{B_1}=\overline{B_1}\setminus \{0\}$.
We consider  the space 
\eqn
X=C^1(\overline{B_1})\cap \{f~|~f \text{ is axisymmetric and even in } x_3, 
f(0)=0, \|f\|_X<\infty\}
\eeqn
where the norm is $$\|f\|_{X} = \sup_{x\in \dot{B_1}} \frac{|\nabla f(x)|}{|x|}.$$  
It is easy to see that $X$ is a Banach space. In fact, a Cauchy sequence $\{u_n\}$ in $X$ is also Cauchy and thus converges to $u$ in $C^1(\overline{ B_1})$. Now for any $x\in \dot{B_1}$, $|\nabla u_n(x)-\nabla u_m(x)| < \epsilon |x|$ for $n, m$ sufficiently large. Taking $m$ to infinity, we get $|\nabla u_n(x) - \nabla u(x)| < \epsilon |x|$, which means $\{u_n\}$ converges to $u$ in $X$.
Finally, we define $X_\delta  =  \{f\in X\ | \ \|f\|_X \le \delta\}$. 

\begin{lemma}   \label{Frechet}
The operator $\F$ is continuously Fr\'{e}chet differentiable from 
$X_\delta\times X_\delta \times \real \times\real$ into $X\times X$ provided $\delta$ is sufficiently small.
\end{lemma}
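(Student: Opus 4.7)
The plan is to decompose $\F = (\F_1,\F_2)^t$ into elementary building blocks, check continuous Fréchet differentiability of each piece in appropriate function-space topologies, and assemble via the chain and product rules. The three fundamental ways $\zeta$ enters the problem are through the diffeomorphism $\gz$ and its inverse, through $\rho_\zeta(z) = \M(\zeta)\rho_0(\gz^{-1}(z))$, and through the scalar multiplier $\M(\zeta)$; each of these dependencies is known from \cite{SW2017} to be smooth in the norms we need.

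For $\F_1$ I would treat four terms separately. The gravitational piece $(\rho_\zeta \ast |\cdot|^{-1})(\gz(x)) - (\rho_\zeta \ast |\cdot|^{-1})(0)$ is rewritten by the change of variable $y \mapsto \gz(y)$ as an integral over $B_1$ against $\M(\zeta)\rho_0(y) J_\zeta(y)$ with kernel $|\gz(x)-\gz(y)|^{-1} - |\gz(y)|^{-1}$; differentiation in $\zeta$ produces integrals with kernels of Newtonian or weaker singularity, which map into $X$ because the Newtonian potential of a $C^{1,\al}$ compactly supported density is locally $C^{2,\al}$. The rotational term is polynomial in $\omega$, $\zeta$, and the coordinates and is obviously smooth. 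The enthalpy difference $h(\M(\zeta)\rho_0(x)) - h(\M(\zeta)\rho_0(0))$ is smooth in $\M(\zeta)$ near $\rho_0(0)>0$ since $h\in C^3$ and $\rho_0 \in C^2$ inside its support. The magnetic term is a Nemytskii composition with $K \in C^3$ applied to $\varphi \in X$; using $\varphi(0)=0$, the standard composition-operator argument gives an $X$-valued, $C^1$-Fréchet map, and similarly in $\varphi$.

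For $\F_2$ the linear $\varphi$ piece is trivial, and the nonlinear contribution $\ep L^{-1}(\rho_\zeta k(\varphi\circ\gz^{-1}))(\gz(x))$ decomposes as: (i) precomposition $\varphi\mapsto\varphi\circ\gz^{-1}$; (ii) multiplication by $\rho_\zeta$; (iii) application of $L^{-1}$; (iv) postcomposition with $\gz$. Steps (i) and (iv) rely on the Lipschitz smoothness of $\gz^{\pm 1}$ in $\zeta$ established in \cite{SW2017}, step (ii) is bilinear, and step (iii) appeals to the mapping and differentiability properties of $L^{-1}$ developed in Section \ref{sec: background}. Continuity of each Fréchet derivative in the operator norm then follows by combining the Lipschitz dependencies above with the $C^2$ regularity of $k, K, h$.

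The main obstacle I expect is verifying that the derivatives actually land in $X$, whose norm $\sup_{x\ne 0}|\nabla f(x)|/|x|$ demands that $\nabla f$ vanishes linearly at the origin. Axisymmetry together with evenness in $x_3$ forces $\nabla f(0)=0$ automatically, so the real content is a $C^{1,1}$-type estimate at $0$; for the local terms this follows from symmetry plus pointwise regularity, but for the two nonlocal operators---the gravitational convolution and $L^{-1}$---it requires the detailed smoothness analysis of Newtonian-type potentials of compactly supported axisymmetric densities. For $L^{-1}$ this is exactly the content of Section \ref{sec: background}, while for the gravitational term it follows from classical interior regularity of the Newtonian potential applied to $\rho_\zeta \in C^{1,\al}_c$, the axisymmetry being preserved because $\rho_\zeta$ and $\gz$ respect the symmetry class.
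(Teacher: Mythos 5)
Your overall architecture (term-by-term treatment of $\F_1$, reliance on \cite{SW2017} for the gravitational and enthalpy pieces, Section \ref{sec: background} for $L^{-1}$) matches the paper's, but the factorization of the nonlinear part of $\F_2$ into independently Fr\'echet-differentiable blocks --- precomposition, multiplication, $L^{-1}$, postcomposition --- has a genuine gap: derivative loss in the composition operators. Differentiating $\varphi\circ\gz^{-1}$ or $w\circ\gz$ in $\zeta$ costs one spatial derivative of the outer function, and the target norm $\|f\|_X=\sup|\nabla f(x)|/|x|$ costs another. So if you route the argument through $L^{-1}:L^\infty\to C^{1,\beta}$ as a bounded linear block, the postcomposition step needs $\nabla^2 w$ and the chain does not close. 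The paper avoids this by computing the formal derivative of $\F_3$ explicitly (equations \eqref{eq: 68}--\eqref{eq: 70}) and spending the smoothing of $L^{-1}$ differently on different terms: the second-gradient term $\nabla^2L^{-1}$ only ever acts on the Lipschitz function $\rho_0(\gz^{-1})k(\varphi(\gz^{-1}))$ (so Lemma \ref{lem: D2 L inv} applies), while the terms whose argument is merely $L^\infty$ (those containing $\nabla\rho_0(\gz^{-1})$ and $\nabla\varphi(\gz^{-1})$) appear only under a single gradient (Lemma \ref{lem: 1.1}). This bookkeeping is invisible to an abstract chain-rule argument, and it is the reason the paper needs the representation \eqref{eq: 290}, which moves the inner composition $\gz^{-1}$ out of the density and into the kernel so that differences in $\zeta$ can be estimated with only $L^\infty$ control on the density (Lemmas \ref{lem: 2.3} and \ref{lem: 1.5}).

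Two further points. First, your claim that continuity of the derivative "follows from the Lipschitz dependencies" is not attainable: compositions such as $\nabla\rho_0(\gzone^{-1})-\nabla\rho_0(\gztwo^{-1})$ are only H\"older in $\|\zeta_1-\zeta_2\|_X$, and even then with a strict loss of exponent (Lemma \ref{lem: 2.4} and the remark following it), while the kernel differences cost a factor $d(1+|\log d|)$; the paper's Lemma \ref{lem: continuity Frechet} therefore only asserts H\"older continuity of the derivative, which suffices but must be what you prove. Second, because the building blocks are not separately $C^1$ into the needed spaces, you cannot invoke the chain rule to conclude that the formal derivative is the Fr\'echet derivative; the paper instead proves equality of the mixed partials $\partial_i\partial_s=\partial_s\partial_i$ (Lemma \ref{lem: mixed partial}) and runs a mean-value argument in $s$ on $\partial_i\F(\zeta+s\xi,\varphi+s\eta)$ (Lemma \ref{lem: Frechet}) to convert the pointwise derivative plus its H\"older continuity into the quadratic-remainder estimate in the $X$ norm. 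Your proposal omits this step entirely, and without it the passage from Gateaux-type to Fr\'echet differentiability in a norm involving $\sup|\nabla f|/|x|$ is unjustified.
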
  
Lemma \ref{Frechet} will be proven in Section 4.  
\begin{lemma}  \label{kernel}
$\frac{\partial \F}{\partial (\zeta,\varphi)}(0,0,0,0):X^2\to X^2$ is an isomorphism. 
\end{lemma}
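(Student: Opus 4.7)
The strategy is to observe that at the base point $(0,0,0,0)$ the Jacobian $\frac{\partial \F}{\partial(\zeta,\varphi)}$ decouples into a block-diagonal form, and then to reduce the isomorphism claim to a result already established in \cite{SW2017} for the purely gravitational rotating-star problem.

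First, I would compute each of the four partial derivative blocks. In $\F_1$, every appearance of $\varphi$ occurs inside the combination $-\ep K(\varphi(x))+\ep K(\varphi(0))$, so $\partial_\varphi \F_1$ carries an overall factor of $\ep$ and vanishes at $\ep=0$. In $\F_2(\zeta,\varphi,\omega,\ep) = \varphi - \ep L^{-1}(\rho_\zeta k(\varphi\circ g_\zeta^{-1}))(g_\zeta(\cdot))$, every $\zeta$-dependent term sits inside the $\ep$-multiplied nonlinearity, so $\partial_\zeta \F_2(0,0,0,0)=0$ and $\partial_\varphi \F_2(0,0,0,0) = I_X$. Hence, writing $A := \partial_\zeta \F_1(0,0,0,0)$,
\begin{equation}
\frac{\partial \F}{\partial(\zeta,\varphi)}(0,0,0,0) = \begin{pmatrix} A & 0 \\ 0 & I_X \end{pmatrix} : X^2 \to X^2.
\end{equation}
Because the lower-right block is the identity, the invertibility of this Jacobian is equivalent to $A:X\to X$ being an isomorphism.

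Second, I would identify $A$ with an operator already analyzed in \cite{SW2017}. Setting $\varphi=0,\ \omega=0,\ \ep=0$, the remaining functional
\begin{equation}
\F_1(\zeta,0,0,0)(x) = \left(\rho_\zeta * \tfrac{1}{|\cdot|}\right)(\gz(x)) - \left(\rho_\zeta * \tfrac{1}{|\cdot|}\right)(0) - h(\M(\zeta)\rho_0(x)) + h(\M(\zeta)\rho_0(0))
\end{equation}
is exactly the nonlinear map whose linearization at $\zeta=0$ is the key operator of the non-magnetic, non-rotating perturbation analysis carried out in \cite{SW2017}. That reference proves, on the very same Banach space $X$ with norm $\sup_{x\in\dot B_1}|\nabla f(x)|/|x|$, that this linearization is an isomorphism under the standing assumptions $\gamma\neq 4/3$ (more generally \eqref{cond: p1}--\eqref{cond: p2}) and $M'(\rho_0(0))\neq 0$. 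The role of the hypothesis $\gamma\neq 4/3$ is precisely to rule out the scaling kernel mentioned in the discussion following Theorem 1.1, while $M'(\rho_0(0))\neq 0$ ensures that the rank-one correction coming from the mass-preserving normalization $\M(\zeta)$ remains non-degenerate.

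The main obstacle is conceptual rather than computational: one must verify that our choice of ansatz $\rho_\zeta$, the normalization $\M(\zeta)$, the space $X$, and the region $\overline B_1$ all match the setup of \cite{SW2017} exactly, so that their isomorphism theorem applies verbatim. Because these ingredients were introduced in the excerpt by direct citation to that work, this identification is essentially bookkeeping, and no new invertibility argument is needed once the block-diagonal structure above is in hand.
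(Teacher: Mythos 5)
Your proposal is correct and follows essentially the same route as the paper: both reduce the Jacobian at $(0,0,0,0)$ to the block-diagonal form $\begin{pmatrix} \partial_\zeta\F_1 & 0\\ 0 & I\end{pmatrix}$ (the off-diagonal blocks and the $\zeta$-dependence of $\F_2$ vanishing because of the factor $\epsilon=0$), and then identify the upper-left block with the operator $\L$ of Theorem 4.1 in \cite{SW2017}, whose invertibility on $X$ is quoted. Your additional remarks on the roles of $\gamma\neq 4/3$ and $M'(\rho_0(0))\neq 0$ are consistent with the paper's discussion.
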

\begin{proof}  We write $\frac{\partial \F}{\partial (\zeta,\varphi)}(0,0,0,0): X^2\to X^2$ as a matrix:
\eqn
\begin{pmatrix}\frac{\partial \F_1}{\partial \zeta} &  \frac{\partial \F_1}{\partial \varphi}\\ \frac{\partial \F_2}{\partial \zeta}  & \frac{\partial \F_2}{\partial \varphi} \end{pmatrix}(0,0,0,0) = \begin{pmatrix} \frac{\partial \F_1}{\partial \zeta}(0,0,0,0)& 0 \\ 0 & I \end{pmatrix}.
\eeqn
By Theorem 4.1 in \cite{SW2017}, and the fact that $\L$ in \cite{SW2017} is the same as $\frac{\partial \F_1}{\partial \zeta} (0,0,0,0)$, we immediately see that $\frac{\partial \F}{\partial (\zeta,\varphi)}(0,0,0,0)$ is an isomorphism.
\end{proof}

Given these two lemmas and the fact that $\F(0,0,0,0)=0$,  
the implicit function theorem provides a solution for every small enough $\epsilon$ and $\omega$. 

In addition, with the linearized operators, we may study the first order approximation of $\zeta(\omega,\epsilon)$ and $\varphi(\omega,\epsilon)$ as functions of $\omega^2$ and $\epsilon$. In fact, one easily obtains
\eqn
\begin{pmatrix} \zeta(\omega,\epsilon) \\ \varphi(\omega,\epsilon)\end{pmatrix} = \begin{pmatrix}\frac{\partial\zeta}{\partial \omega^2}(0,0)\omega^2 + \frac{\partial\zeta}{\partial\epsilon}(0,0)\epsilon\\ \frac{\partial\varphi}{\partial \omega^2}(0,0)\omega^2 + \frac{\partial\varphi}{\partial\epsilon}(0,0)\epsilon \end{pmatrix} + o(|\omega|^2+|\epsilon|),
\eeqn
where 
\begin{align*}
\begin{pmatrix}\frac{\partial \zeta}{\partial \omega^2}(0,0)\\\frac{\partial\varphi}{\partial \omega^2}(0,0) \end{pmatrix} &= -\begin{pmatrix} \frac{\partial \F_1}{\partial \zeta}(0,0,0,0) & 0 \\ 0 & I\end{pmatrix}^{-1}\begin{pmatrix}\frac12 r^2\\0 \end{pmatrix}\\
&= \begin{pmatrix} -\frac12 [\frac{\partial \F_1}{\partial \zeta}(0,0,0,0)]^{-1} r^2\\ 0 \end{pmatrix},
\end{align*}
and
\begin{align*}
\begin{pmatrix}\frac{\partial \zeta}{\partial \epsilon}(0,0)\\\frac{\partial\varphi}{\partial \epsilon}(0,0) \end{pmatrix} &= -\begin{pmatrix} \frac{\partial \F_1}{\partial \zeta}(0,0,0,0) & 0 \\ 0 & I\end{pmatrix}^{-1}\begin{pmatrix}0\\ -L^{-1}(\rho_0k(0)) \end{pmatrix}\\
&= \begin{pmatrix} 0\\  k(0)L^{-1}(\rho_0)\end{pmatrix}.
\end{align*}
In other words, the first order approximation of $\zeta(\omega,\epsilon)$ and $\varphi(\omega,\epsilon)$ are given by $-\frac{\omega^2}2 [\frac{\partial \F_1}{\partial \zeta}(0,0,0,0)]^{-1} r^2$ and $\epsilon k(0) L^{-1}(\rho_0)$ respectively.

Notice that the magnetic field does not affect the lowest order change in the shape of the star, because $\epsilon$ does not appear in the approximation for $\zeta(\omega,\epsilon)$. By the discussion in Section 4.6 of \cite{SW2017}, we see that the first order approximation of $\zeta(\omega,\epsilon)$ gives rise to an oblate star, which is wider at the equator than at the poles. On the other hand, if $k(0)\ne 0$, the first order approximation of $\varphi(\omega,\epsilon)$ is necessarily nonzero, giving rise to a genuine magnetic star. 

\section{The operator $L^{-1}$}\label{sec: background}

Before we prove the Fr\'echet differentiability of $\F$, we carefully define $L^{-1}$ and 
compile a few useful estimates of it.   Consider the equation $Lu=f$ in $\real^3$, 
where both $u$ and $f$ are axisymmetric and even in $x_3$, where $f$ is compactly supported, 
and where $u(\infty)=0$.   If we define $v={u}/{r^2}$, then a simple calculation gives
\eqn\label{eq: 55}
v_{rr}+\frac3rv_r + v_{zz}= f.   \eeqn 
We write $z=x_3,\ r=\sqrt{x_1^2+x_2^2},\ \hat v(r,z) = v(x_1,x_2,x_3)$.  
Following \cite{FLS}, we use the five-dimensional extension of the equation to obtain a simple explicit formula for the solution. Let $\tilde{v}$ be the 5D extensions of $v$ defined by 
\begin{align}
\tilde{v}(x_1,x_2,x_3,x_4,x_5) &= v\left(\sqrt{x_1^2+x_2^2+x_3^2+x_4^2}\cos \theta, \sqrt{x_1^2+x_2^2+x_3^2+x_4^2}\sin \theta, x_5\right)\notag\\
&=  \hat v\left(\sqrt{x_1^2+x_2^2+x_3^2+x_4^2}, x_5\right),
\end{align}
with $\tilde{f}$  defined in the same way.
Thus \eqref{eq: 55} can be written as 
$\Delta_5\tilde v = \tilde f.$  
Moreover, the condition $u(\infty)=0$ implies $\tilde{v}(\infty) =0$. 
It follows that 
\begin{align}
v(x_1,x_2,x_3) &= v(\sqrt{x_1^2+x_2^2},0,x_3) = \tilde v(x_1,x_2,0,0,x_3)\notag\\
&= C_5\int_{\real^5}\frac{1}{|(x_1,x_2,0,0,x_3)-y|^3}\tilde f(y)~dy.  
\end{align}
Thus 
\begin{equation}\label{eq: 60}
(L^{-1}f)(x)=u(x_1,x_2,x_3)= C_5(x_1^2+x_2^2)(L_1^{-1}f)(x),  
\end{equation}
where we define the integral operator
\eqn
(L_1^{-1}f)(x) = \int_{\real^5}\frac{1}{|(x_1,x_2,0,0,x_3)-y|^3}\tilde f(y)~dy.  \eeqn
Since 
$$\left|\sqrt{x_1^2+x_2^2+x_3^2+x_4^2}-\sqrt{y_1^2+y_2^2+y_3^2+y_4^2}\right|\le |x-y|$$
for $x,y\in\real^5$, there exists a constant $C=C(\beta,R)$ such that 
\eqn
\|\tilde f\|_{C^{0,\beta}(B_R(\real^5))}\le C\| f\|_{C^{0,\beta}(B_R(\real^3))}  \eeqn
for any $0<\beta\le 1$ and $R>0$. In other words, $\tilde f$ is as regular as $f$. 
It is worthwhile to keep in mind that, by \eqref{eq: 60}, $L^{-1}$ is basically a quadratic function vanishing at the origin multiplied by the inverse Laplacian in 5D. Therefore, in addition to enjoying all the regularization properties of the inverse Laplacian, it also automatically vanishes to quadratic degree at the origin.   This observation gives rise to the next lemma.

\begin{lemma}\label{lem: 1.1} Let $f\in L^\infty(B_2)$. Then $L^{-1}f \in C^{1,\beta}(\overline{B_2})$ for every $0<\beta<1$. There is a constant $C>0$ such that   
\eqn
| L^{-1}f(z) | + | \nabla L^{-1}f (z)| \leq C \|f\|_{L^\infty(B_2)}|z|
\eeqn
for any $z\in B_2$. 
\end{lemma}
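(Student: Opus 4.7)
The plan is to exploit the factorization $L^{-1}f(z) = C_5(z_1^2+z_2^2)L_1^{-1}f(z)$ from \eqref{eq: 60} and reduce the claim to standard Newton-potential estimates in $\real^5$. Viewing $f$ as extended by zero outside $B_2$, its 5D extension $\tilde f$ is supported in $\overline{B_2}\subset \real^5$ with $\|\tilde f\|_{L^\infty(\real^5)} = \|f\|_{L^\infty(B_2)}$, and up to a constant $L_1^{-1}$ is convolution with the Newtonian kernel $|x|^{-3}$ in $\real^5$, i.e.\ the inverse of $-\Delta_5$. The smooth prefactor $(z_1^2+z_2^2)$ vanishes to second order on the axis $\{z_1=z_2=0\}$, and this is what will produce the linear-in-$|z|$ upper bound.

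The key step is to establish two uniform pointwise estimates, namely $\|L_1^{-1}f\|_{L^\infty(\real^5)} \le C\|f\|_{L^\infty(B_2)}$ and $\|\nabla L_1^{-1}f\|_{L^\infty(\real^5)} \le C\|f\|_{L^\infty(B_2)}$. The first follows immediately from $\sup_z \int_{B_2(\real^5)} |z-y|^{-3}\,dy < \infty$, since the exponent $3$ is strictly less than the ambient dimension $5$. For the second, differentiation under the integral produces a kernel of order $|z-y|^{-4}$; as $4<5$, this singularity remains integrable and the tail at infinity is harmless because $\tilde f$ is compactly supported. The $C^{1,\beta}$ regularity of $L_1^{-1}f$ for every $\beta<1$ then follows from the classical $W^{2,p}$ bound on the Newton potential of an $L^\infty$, compactly supported function combined with the Morrey embedding $W^{2,p}\hookrightarrow C^{1,\beta}$ for large $p$. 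Restricting the resulting 5D function back to the axisymmetric slice preserves this regularity on $\real^3$.

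With these bounds in hand, the conclusion follows by applying the Leibniz rule to $L^{-1}f = C_5(z_1^2+z_2^2)L_1^{-1}f$: multiplication by the smooth polynomial $z_1^2+z_2^2$ preserves $C^{1,\beta}$ regularity on compact sets, and on $B_2$ one has $(z_1^2+z_2^2)\le |z|^2 \le 2|z|$ together with $|\nabla(z_1^2+z_2^2)| = 2(z_1^2+z_2^2)^{1/2} \le 2|z|$. Combining these with the two $L^\infty$ bounds on $L_1^{-1}f$ and $\nabla L_1^{-1}f$ yields both $|L^{-1}f(z)| \le C|z|\,\|f\|_{L^\infty(B_2)}$ and $|\nabla L^{-1}f(z)| \le C|z|\,\|f\|_{L^\infty(B_2)}$. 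There is no substantive analytic obstacle: the quadratic prefactor carries the linear vanishing on the axis for free, and the inverse Laplacian of a bounded compactly supported function is a classical object in potential theory. The only real care required is bookkeeping of the five-dimensional kernel and tracking that the constants depend only on $\beta$.
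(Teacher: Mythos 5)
Your proposal is correct and follows essentially the same route as the paper: both rest on the factorization $L^{-1}f=C_5(z_1^2+z_2^2)L_1^{-1}f$, the Leibniz rule for the gradient, and the uniform bounds on $\int_{B_2\subset\real^5}|x-y|^{-3}\,dy$ and $\int_{B_2\subset\real^5}|x-y|^{-4}\,dy$, with the quadratic prefactor supplying the factor $|z|$. The only difference is that you spell out the $C^{1,\beta}$ regularity (via $W^{2,p}$ plus Morrey) and the kernel estimates in more detail than the paper, which simply asserts them.
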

\begin{proof} 
Using \eqref{eq: 60} and its gradient, we have 
\begin{align}\label{eq: grad L inv}
(\nabla L^{-1} f)(z) &= C_5\nabla(z_1^2+z_2^2)(L_1^{-1}f)(z)+C_5(z_1^2+z_2^2)(\nabla L_1^{-1}f)(z),
\end{align}
the inequality follows from the boundedness 
of $\int_{B_2\subset \real^5} \frac{1}{|x-y|^3} dy$ and  $\int_{B_2\subset \real^5} \frac{1}{|x-y|^4} dy$. 
\end{proof}

Combining the $C^{1,\beta}$ regularity of $L_1^{-1}f$ with the properties of $\gz$, we get the next lemma. 

\begin{lemma}\label{lem: 1.2} Let $f\in L^\infty(B_2)$, $\zeta_1,\zeta_2\in X_\delta$ where $\delta>0$ is sufficiently small. For any $0<\beta<1$, there exists $C_\beta>0$ such that 
\eqn
| [\nabla L^{-1}f ]( \gzone(x)) -  [\nabla L^{-1}f] ( \gztwo(x)) | \leq C_\beta \|f\|_{L^\infty(B_2)}\|\zeta_1-\zeta_2\|_X^\beta|x|
\eeqn
for $x\in \overline{B_1}$. 
\end{lemma}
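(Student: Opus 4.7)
The plan is to reduce to the identity established inside the proof of Lemma \ref{lem: 1.1},
\begin{equation*}
\nabla L^{-1}f(z) = 2C_5(z_1,z_2,0)\,L_1^{-1}f(z) + C_5(z_1^2+z_2^2)\,\nabla L_1^{-1}f(z),
\end{equation*}
and to exploit two ingredients: (i) a geometric estimate $|\gzone(x)-\gztwo(x)|\le C\|\zeta_1-\zeta_2\|_X|x|$ for $x\in\overline{B_1}$, and (ii) the Schauder-type bounds $\|L_1^{-1}f\|_{C^{1,\beta}(\overline{B_2})}\le C_\beta\|f\|_{L^\infty(B_2)}$ for every $0<\beta<1$, which hold since $\tilde f$ is bounded and compactly supported in $\real^5$.

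First I would establish (i). Since $\gz(x)=x+x\zeta(x)/|x|^2$, one has $\gzone(x)-\gztwo(x)=x(\zeta_1-\zeta_2)(x)/|x|^2$. Because $\zeta_1-\zeta_2\in X$ vanishes at the origin and satisfies $|\nabla(\zeta_1-\zeta_2)(y)|\le\|\zeta_1-\zeta_2\|_X|y|$, integrating along the ray from $0$ to $x$ gives $|(\zeta_1-\zeta_2)(x)|\le\tfrac12\|\zeta_1-\zeta_2\|_X|x|^2$, from which (i) is immediate. The same argument applied to $\zeta_i$ alone yields $|g_{\zeta_i}(x)|\le(1+\tfrac12\delta)|x|$, so for $\delta$ sufficiently small each $g_{\zeta_i}(x)$ lies in a fixed ball inside $B_2$, on which (ii) applies uniformly.

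Next I would split $\nabla L^{-1}f=T_1+T_2$ with $T_1(z)=2C_5(z_1,z_2,0)L_1^{-1}f(z)$ and $T_2(z)=C_5(z_1^2+z_2^2)\nabla L_1^{-1}f(z)$, and for each $T_i$ expand the difference $T_i(\gzone(x))-T_i(\gztwo(x))$ via $ab-cd=a(b-d)+(a-c)d$. The $T_1$ contribution uses only that $L_1^{-1}f$ is Lipschitz on $\overline{B_2}$ with constant $C\|f\|_{L^\infty(B_2)}$; combined with (i) and $|g_{\zeta_i}(x)|\le C|x|$ this gives a bound of order $\|f\|_{L^\infty(B_2)}\|\zeta_1-\zeta_2\|_X|x|$. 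The $T_2$ contribution invokes the full $C^{0,\beta}$ bound on $\nabla L_1^{-1}f$ together with the quadratic factor $z_1^2+z_2^2=O(|x|^2)$, producing a bound of order $C_\beta\|f\|_{L^\infty(B_2)}\|\zeta_1-\zeta_2\|_X^\beta|x|^{1+\beta}$.

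To match the exact form of the conclusion, I would then use $|x|\le 1$ to replace $|x|^{1+\beta}$ by $|x|$ and use $\|\zeta_1-\zeta_2\|_X\le 2\delta$ to interpolate $\|\zeta_1-\zeta_2\|_X\le(2\delta)^{1-\beta}\|\zeta_1-\zeta_2\|_X^\beta$, thereby absorbing every Lipschitz-type factor into the desired Hölder form. I do not anticipate a genuine obstacle: the only technical care is in confirming that $g_{\zeta_i}(x)$ stays in a region where the Schauder bounds for the 5D Newton potential are uniform, which is automatic from the smallness of $\delta$. Everything else is bookkeeping built on the $C^{1,\beta}$-regularity of $L_1^{-1}f$ and the geometric estimate (i).
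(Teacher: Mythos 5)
Your proposal is correct and follows essentially the same route as the paper: the same decomposition of $\nabla L^{-1}f$ into a Lipschitz part and a quadratic-prefactor-times-$C^{0,\beta}$ part via \eqref{eq: grad L inv}, the same geometric estimate $|\gzone(x)-\gztwo(x)|\le C\|\zeta_1-\zeta_2\|_X|x|$ (which the paper cites from Lemma 3.4 of \cite{SW2017} and you rederive), and the same absorption of the Lipschitz terms into the H\"older form using $|x|\le 1$ and the smallness of $\|\zeta_1-\zeta_2\|_X$. The only difference is that you spell out details the paper leaves implicit.
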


\begin{proof}
By \eqref{eq: grad L inv}, $\nabla L^{-1}f$ is of the form $f_1+(z_1^2+z_2^2)f_2$, where $\|f_1\|_{C_1(B_2)}\le C\|f\|_{L^\infty}$, and $\|f_2\|_{C^{0,\beta}(B_2)}\le C_\beta \|f\|_{L^\infty}$. It follows that
\begin{align}
&~| [\nabla L^{-1}f ]( \gzone(x)) -  [\nabla L^{-1}f] ( \gztwo(x)) | \notag\\
\le &~C\|f\|_{L^\infty}|\gzone(x)-\gztwo(x)|+C_\beta\|f\|_{L^\infty} |\gzone(x)||\gzone(x)-\gztwo(x)|^\beta\notag\\
\le &~C_\beta \|f\|_{L^\infty(B_2)}\|\zeta_1-\zeta_2\|_X^\beta|x|.
\end{align}
In the last step, we used $|\gzone(x)-\gztwo(x)|\le \|\zeta_1-\zeta_2\|_X|x|$, and $|\gzone(x)|\le (1+\|\zeta\|_X)|x|$ (cf. Lemma 3.4 of \cite{SW2017}).
\end{proof}

The standard elliptic Schauder estimates immediately imply the following lemma. 
\begin{lemma}\label{lem: D2 L inv}
Let $f\in C^{0,\beta}(\overline{B_2})$ for some $0<\beta<1$. Then $L^{-1}f\in C^{2,\beta}(\overline{B_2})$, and there is a constant $C_\beta>0$ such that
\eqn
\| \nabla^2 L^{-1}f \|_{C^{0,\beta}(\overline{B_2})} \leq C_\beta \|f\|_{C^{0,\beta}(B_2)}.\eeqn
\end{lemma}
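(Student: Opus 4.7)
The plan is a direct reduction to interior Schauder estimates for the ordinary Laplacian on $\real^5$, via the same five-dimensional extension trick used to prove Lemmas \ref{lem: 1.1} and \ref{lem: 1.2}. Recall from \eqref{eq: 60} that $L^{-1}f(z) = C_5(z_1^2+z_2^2)(L_1^{-1}f)(z)$; since the prefactor $z_1^2+z_2^2$ is $C^\infty$ with bounded derivatives on $\overline{B_2}$, the Leibniz rule reduces the statement to the bound
\eqn
\|L_1^{-1}f\|_{C^{2,\beta}(\overline{B_2(\real^3)})} \le C_\beta \|f\|_{C^{0,\beta}(B_2)}.
\eeqn

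To establish this, let $\tilde u$ be the 5D function obtained from $L_1^{-1}f$ as in the construction preceding \eqref{eq: 60}, so that $\Delta_5 \tilde u = c\,\tilde f$ in $\real^5$ with $\tilde u(\infty)=0$. Here $\tilde f$ is the 5D axisymmetric extension of $f$, which we take to be compactly supported in $B_2(\real^3)$ and extended by zero outside (consistent with the use of $L^{-1}$ in Section 2). The norm comparison stated just after \eqref{eq: 60} yields $\|\tilde f\|_{C^{0,\beta}(\overline{B_3(\real^5)})} \le C \|f\|_{C^{0,\beta}(B_2)}$, while the 5D Newtonian-potential bound already used in Lemma \ref{lem: 1.1} gives $\|\tilde u\|_{L^\infty(\real^5)} \le C\|\tilde f\|_{L^\infty}$. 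Feeding these into the standard interior Schauder estimate applied on $B_2(\real^5)\subset\subset B_3(\real^5)$ yields
\eqn
\|\nabla_5^2 \tilde u\|_{C^{0,\beta}(\overline{B_2(\real^5)})} \le C_\beta\bigl(\|\tilde u\|_{L^\infty(B_3(\real^5))} + \|\tilde f\|_{C^{0,\beta}(\overline{B_3(\real^5)})}\bigr) \le C_\beta \|f\|_{C^{0,\beta}(B_2)}.
\eeqn

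Finally, restricting $\tilde u$ to the 3D slice $\{(x_1,x_2,0,0,x_3) : (x_1,x_2,x_3)\in\real^3\}$ recovers a constant multiple of $L_1^{-1}f$ and preserves the $C^{2,\beta}$ norm (with constant $1$), so the displayed bound transfers to $L_1^{-1}f$ on $\overline{B_2(\real^3)}$. Multiplying by $C_5(z_1^2+z_2^2)$ and expanding $\nabla^2 L^{-1}f$ via Leibniz's rule then gives the stated estimate, since each resulting term is either a smooth polynomial times a $C^{0,\beta}$ function or a $C^{0,\beta}$ function times a bounded smooth factor. There is no genuine obstacle; the only step deserving care is the bookkeeping of the $L^\infty$ term in the Schauder estimate, which is handled exactly as in Lemma \ref{lem: 1.1} thanks to the compact support of $\tilde f$.
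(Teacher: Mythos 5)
Your argument is correct and is exactly the route the paper intends: the paper's proof is the single sentence that "standard elliptic Schauder estimates immediately imply" the lemma, relying on the 5D extension and the representation \eqref{eq: 60} already set up in Section 3, which is precisely what you have written out (reduction to $\Delta_5\tilde v=\tilde f$, Schauder/Newtonian-potential bounds, restriction to the slice, and Leibniz with the smooth prefactor $z_1^2+z_2^2$). Your explicit remark that $f$ should be compactly supported inside $B_2$ so that the zero extension stays H\"older is a reasonable reading of the paper's standing assumption on $L^{-1}$ and matches how the lemma is applied.
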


Next we discuss a few more delicate estimates involving interior composition with the deformation $\gz^{-1}$ before the action of $L^{-1}$. To that end, we first write an alternative expression for $L^{-1}(u(\gz^{-1}))(z)$: 
\begin{align}
L^{-1}(u(\gz^{-1}))(z)&= C_5(z_1^2+z_2^2)\int_{\real^5}\frac{1}{|(z_1,z_2,0,0,z_3)-y|^3}\widetilde{u(\gz^{-1})}(y)~dy\notag\\
&=C_5(z_1^2+z_2^2)\int_{\real^5}\frac{1}{|(z_1,z_2,0,0,z_3)-y|^3}\tilde{u}(\widetilde{\gz}^{-1})(y)~dy\notag\\
&=C_5(z_1^2+z_2^2)\int_{\text{supp }\tilde u}\frac{1}{|(z_1,z_2,0,0,z_3)-\widetilde{\gz}(y)|^3}\tilde{u}(y)\det D \widetilde{\gz}(y)~dy\label{eq: 290}
\end{align} 
where we define $\widetilde{\gz}:\real^5\to \real^5$ as 
\eqn
\widetilde{\gz}(x) = x\left(1+\frac{\tilde{\zeta}(x)}{|x|^2}\right).   \eeqn
In the preceding identity we have used the nontrivial but straightforward fact that 
\eqn
\widetilde{u(\gz^{-1})}(y)=\tilde{u}(\widetilde{\gz}^{-1})(y).
\eeqn
Similarly, for a row-vector-valued function $u_a$ and a column-vector-valued function $u_b$, 
we obtain the alternative expression 
\begin{align}\label{eq: L^-1 alt 2}
&~L^{-1}\left(u_a(\gz^{-1})D (\gz^{-1})u_b(\gz^{-1})\right)(z)\notag\\
=&~C_5(z_1^2+z_2^2)\int_{\real^5}\frac{[u_a(\nabla\gz)^{-1}u_b]^{~\widetilde{ }}(y)}{|(z_1,z_2,0,0,z_3)-\widetilde{\gz}(y)|^3}\det D \widetilde{\gz}(y)~dy.
\end{align}

The reason we write such alternative expressions is that we want to estimate the difference 
$ L^{-1}(u(\gzone^{-1})) -L^{-1}(u(\gztwo^{-1}))$ assuming only an $L^\infty$ control on $u$.   
In order to get an estimate depending on $\|\zeta_1-\zeta_2\|_X$, 
we must use \eqref{eq: 290} to move the interior composition of $\gz^{-1}$ out of $ u$. 
The following lemma will be useful in deriving the difference estimate of 
$ L^{-1}(u(\gzone^{-1})) -L^{-1}(u(\gztwo^{-1}))$: 

\begin{lemma}\label{lem: 2.3} Suppose $\zeta_1,\zeta_2\in X_\delta$ where $\delta>0$ 
is sufficiently small and $u\in L^\infty(B_1)$. Then
\begin{align}\label{eq: lem 1.4}
&~\left|\nabla_z \int_{B_1\subset \real^5}\left(\frac{1}{|(z_1,z_2,0,0,z_3)-\widetilde{\gzone}(y)|^3}-\frac{1}{|(z_1,z_2,0,0,z_3)-\widetilde{\gztwo}(y)|^3}\right)\tilde u(y)~dy\right| \notag\\
\le &~Cd(1+|\log d|)\|u\|_{L^{\infty}},
\end{align}
where $d=\|\zeta_1-\zeta_2\|_X$, for $z\in B_2\subset\real^3$.
\end{lemma}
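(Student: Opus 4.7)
The plan is to bring the $z$-gradient inside the integral, apply the mean value theorem along the segment joining $\widetilde{\gzone}(y)$ and $\widetilde{\gztwo}(y)$, and then split $B_1$ according to whether $y$ lies close to or far from the singular locus $\{y : w = \widetilde{\gz}(y)\}$, where $w = (z_1,z_2,0,0,z_3) \in \real^5$. The logarithmic factor emerges because the resulting difference kernel decays like $|w - \widetilde{\gz}(y)|^{-5}$, which sits exactly at the borderline of integrability in $\real^5$.

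Write $p_i(y) := \widetilde{g_{\zeta_i}}(y)$ and, for $t \in [0,1]$, $p_t(y) := (1-t)p_2(y) + tp_1(y)$. Since $\widetilde{g_\zeta}(y) = y(1 + \tilde\zeta(y)/|y|^2)$ is affine in $\zeta$, we have $p_t = \widetilde{g_{\zeta_t}}$ with $\zeta_t := (1-t)\zeta_2 + t\zeta_1 \in X_\delta$, so for $\delta$ small each $p_t$ is a $C^1$-diffeomorphism of $B_1 \subset \real^5$ with Jacobian bounded above and below uniformly in $t$ (cf.~Lemma~3.4 of \cite{SW2017}). Because $\zeta_1 - \zeta_2 \in X$ vanishes at $0$, integrating $|\nabla(\zeta_1-\zeta_2)(x)| \le d|x|$ along a radial ray yields $|\tilde\zeta_1(y) - \tilde\zeta_2(y)| \le \tfrac12 d|y|^2$, so
$$|p_1(y) - p_2(y)| \le \tfrac12 d|y| \le \tfrac12 d \quad \text{on } B_1.$$
Bringing $\nabla_z$ inside and applying the mean value theorem to the vector field $p \mapsto -3(w-p)/|w-p|^5$ along the segment $\{p_t(y)\}_{t \in [0,1]}$ gives
$$\left| \nabla_z\!\left( \frac{1}{|w-p_1(y)|^3} - \frac{1}{|w-p_2(y)|^3} \right) \right| \le \frac{Cd}{\min_{t \in [0,1]} |w - p_t(y)|^5},$$
while each individual gradient is bounded by $C/|w - p_i(y)|^4$.

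I would now set $\eta := 2d$ and partition $B_1 = \mathcal{N} \cup \mathcal{F}$ with $\mathcal{N} := \{y \in B_1 : |w - p_1(y)| < 4\eta\}$. On $\mathcal{F}$ the triangle inequality gives $|w - p_t(y)| \ge 4\eta - d/2 \ge 3\eta$ for every $t$, so using the difference bound and changing variables $p = p_1(y)$,
$$\int_\mathcal{F} \frac{Cd\,\|u\|_{L^\infty}}{|w-p_1(y)|^5}\,dy \le Cd\,\|u\|_{L^\infty} \int_{3\eta \le |w-p| \le C'} \frac{dp}{|w-p|^5} \le Cd(1 + |\log d|)\|u\|_{L^\infty},$$
where the last inequality uses polar coordinates in $\real^5$ centered at $w$. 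On $\mathcal{N}$ I would estimate the two gradients separately; changing variables $p = p_i(y)$, the $i$-th contribution is at most
$$C\|u\|_{L^\infty} \int_{|w-p| \le 5\eta} \frac{dp}{|w-p|^4} \le C\eta\,\|u\|_{L^\infty} = Cd\,\|u\|_{L^\infty}.$$
Summing the two regions yields the claimed estimate.

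The main obstacle will be the careful bookkeeping near the singular set: one must control $|w - p_t(y)|$ uniformly in $t$ by $|w - p_1(y)|$ and verify that the three relevant changes of variable ($p = p_1$, $p = p_2$, and $p = p_t$) all yield uniformly bounded Jacobians on the appropriate regions. Both follow from the smallness of $\|\zeta_i\|_X$ together with Lemma~3.4 of \cite{SW2017}, but require precise estimates. The other conceptual point is that the exponent $5$ in $|w-p|^{-5}$ is exactly the borderline in $\real^5$, which is precisely why the $|\log d|$ factor appears and is sharp.
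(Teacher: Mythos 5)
Your proof is correct and follows essentially the same strategy as the paper's: a near/far decomposition at scale $d=\|\zeta_1-\zeta_2\|_X$, bounding the two gradient kernels separately (each like $|w-p|^{-4}$) on the near region and using the mean-value difference bound $Cd\,|w-p|^{-5}$ on the far region, with the logarithm arising from the borderline integrability of $r^{-5}$ in $\real^5$. The only cosmetic difference is that the paper centers its small ball at the midpoint of the preimages $y_{(1)},y_{(2)}$ and works in $y$-coordinates via the comparability $|w-\widetilde{g_{\zeta_i}}(y)|\sim|y-y_{(i)}|$, whereas you define the regions in image space and change variables $p=p_i(y)$; your explicit verification that the interpolants $p_t=\widetilde{g_{\zeta_t}}$ remain uniformly bi-Lipschitz is a nice touch that makes the $\min_t$ control transparent.
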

\begin{proof}
The singularities of the integral are at $y_{(1)}=\widetilde{\gzone}^{-1}(z_1,z_2,0,0,z_3)$ 
and $y_{(2)}=\widetilde{\gztwo}^{-1}(z_1,z_2,0,0,z_3)$.   
For any $y\in \overline{B_2}$, we claim that $|(z_1,z_2,0,0,z_3)-\widetilde{\gzone}(y)|$ 
is comparable to $|y_{(1)}-y|$, and that $|(z_1,z_2,0,0,z_3)-\widetilde{\gztwo}(y)|$ 
is comparable to $|y_{(2)}-y|$. In fact,  
$$
|(z_1,z_2,0,0,z_3)-\widetilde{g_{\zeta_i}}(y)| = |\widetilde{g_{\zeta_i}}(y_{(i)})-\widetilde{g_{\zeta_i}}(y)|\le \|D\widetilde{g_{\zeta_i}}\|_{L^\infty}|y_{(i)}-y|.
$$
Similarly
$$
|y_{(i)}-y|\le \|D\widetilde{g_{\zeta_i}}^{-1}\|_{L^\infty}|(z_1,z_2,0,0,z_3)-\widetilde{\gz}(y)|.
$$
The $L^\infty$ bound on $D\widetilde{g_{\zeta_i}}$ and $D\widetilde{g_{\zeta_i}}^{-1}$ 
follows in a similar fashion to Lemma 3.4 of \cite{SW2017}.   
Using similar estimates involving differences of $\zeta_1$ and $\zeta_2$, 
we obtain that $|\widetilde{\gzone}(y)-\widetilde{\gztwo}(y)|$ and $|y_{(1)}-y_{(2)}|$ 
are both bounded by a constant multiple of $\|\zeta_1-\zeta_2\|_X$.
By the preceding distance estimates, we may choose a ball $B_d$ to be centered at the midpoint 
of $y_{(1)}$ and $y_{(2)}$ with radius comparable to $d=\|\zeta_1-\zeta_2\|_X$, 
such that the following facts hold whenever $y$ is outside $B_d$:
\begin{enumerate}[(a)]
\item $|y-y_{(i)}|\ge 2|y_{(1)}-y_{(2)}|$, $i=1,2$.
\item $|(z_1,z_2,0,0,z_3)-\widetilde{g_{\zeta_i}}(y)|\ge 2|\widetilde{\gzone}(y)-\widetilde{\gztwo}(y)|$, $i=1,2$.
\end{enumerate}
We split the integral into one piece on $B_d$ and another piece off $B_d$. On $B_d$, 
we use the fact that $|(z_1,z_2,0,0,z_3)-\widetilde{g_{\zeta_i}}(y)|$ is comparable to $|y_{(i)}-y|$, 
so the integral is bounded by 
\eqn
C\int_0^d \frac{2}{r^4}r^4~dr \|u\|_{L^{\infty}} = Cd\|u\|_{L^{\infty}}.  \eeqn
Off $B_d$, we use the distance estimates above to conclude that for all $0\le t\le 1$, 
$|(z_1,z_2,0,0,z_3)-t\widetilde{\gzone}(y)-(1-t)\widetilde{\gztwo}(y)|$ is comparable to the distance 
between $y$ and the center of $B_d$, so the integral is bounded by 
\eqn
C\int_d^1\frac{1}{r^5}r^4~dr \|\gzone-\gztwo\|_{\infty}\|u\|_{\infty}\le Cd|\log d|\|u\|_\infty.  \eeqn
\end{proof}

Now we use the preceding lemma to prove the relevant estimate on $L^{-1}$.

\begin{lemma}
\label{lem: 1.5}
Let $u$, $u_a$, $u_b$ be respectively scalar, row-vector-valued and column-vector-valued 
bounded functions supported on $\overline{B_1}$. 
Let $\zeta_1,\zeta_2\in X_\delta$ where $\delta>0$ is sufficiently small. 
Then there is a constant $C>0$ such that
\begin{align}\label{est: lem 1.5.1}
&~|\{\nabla L^{-1}[u(\gzone^{-1})]\}(z)-\{\nabla L^{-1}[u(\gztwo^{-1})]\}(z)|\notag\\
\le &~C d(1+|\log d|)\|u\|_{L^\infty}|z|,
\end{align}
and
\begin{align}\label{est: lem 1.5.2}
~|\{\nabla L^{-1}[u_a(\gzone^{-1})\nabla (\gzone^{-1})u_b(\gzone^{-1})]\}(z)
& - \{\nabla L^{-1}(u_a(\gztwo^{-1})\nabla (\gztwo^{-1})u_b(\gztwo^{-1})]\}(z)|\notag\\
\le &~Cd(1+|\log d|)\|u_1\|_{L^\infty}\|u_2\|_{L^\infty}|z|.
\end{align}
Here $d=\|\zeta_1-\zeta_2\|_X$.
\end{lemma}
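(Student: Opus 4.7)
The plan is to reduce both estimates to Lemma~\ref{lem: 2.3} by first rewriting the compositions $u \circ \gz^{-1}$ using the change-of-variables formulas \eqref{eq: 290} and \eqref{eq: L^-1 alt 2}. This step is essential: since $u$ is only controlled in $L^\infty$, the dependence of $u(\gz^{-1})$ on $\zeta$ cannot be estimated directly. Absorbing $\gz$ into the integration kernel replaces that opaque dependence by an integrand whose $\zeta$-dependence is explicit and quantitative, and whose singular structure is exactly what Lemma~\ref{lem: 2.3} is designed to handle.

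For \eqref{est: lem 1.5.1}, I would differentiate \eqref{eq: 290} in $z$, getting a sum of a term with the prefactor $\nb(z_1^2+z_2^2)$ multiplying the integral and a term with $(z_1^2+z_2^2)$ multiplying the $z$-gradient of the integral. In each, the difference between the $\zeta_1$- and $\zeta_2$-versions is handled by telescoping on the two $\zeta$-dependent factors in the integrand, the kernel $1/|(z_1,z_2,0,0,z_3)-\widetilde{\gz}(y)|^3$ and the Jacobian $\det D\widetilde{\gz}(y)$. The kernel-difference piece in the gradient term is exactly what Lemma~\ref{lem: 2.3} bounds by $Cd(1+|\log d|)\|u\|_{L^\infty}$; the corresponding non-gradient version, needed for the $\nb(z_1^2+z_2^2)$ term, admits a cleaner $Cd\|u\|_{L^\infty}$ estimate by the same near-field/far-field split on a ball of radius $d$, now with the less singular kernel $1/|y-\cdot|^3$ (the near-field integral contributes $O(d^2)$ and the far-field integral contributes $O(d)$). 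For the Jacobian-difference piece, $\|\det D\widetilde{g_{\zeta_1}} - \det D\widetilde{g_{\zeta_2}}\|_{L^\infty} \le Cd$ follows from the linear dependence of $D\widetilde{\gz}$ on $D\widetilde{\zeta}$ together with the $\zeta$-uniform $L^\infty$ bounds from Lemma~3.4 of \cite{SW2017}; combined with the integrability of $1/|y-\cdot|^3$ and $1/|y-\cdot|^4$ over $B_1 \subset \real^5$, this yields a $Cd\|u\|_{L^\infty}$ contribution. Assembling the pieces and using $|\nb(z_1^2+z_2^2)| \le 2|z|$ together with $(z_1^2+z_2^2) \le |z|^2 \le 2|z|$ on $\overline{B_2}$ produces the required $|z|$ weight.

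For \eqref{est: lem 1.5.2}, I would start from \eqref{eq: L^-1 alt 2} and repeat the same strategy, now with three $\zeta$-dependent factors in the integrand: the kernel, the matrix $(D\widetilde{\gz})^{-1}$ sandwiched between $\tilde u_a$ and $\tilde u_b$, and $\det D\widetilde{\gz}$. Telescoping over these three factors gives three contributions. The kernel-difference contribution is again controlled by Lemma~\ref{lem: 2.3}, after absorbing the uniformly bounded weights $\det D\widetilde{g_{\zeta_1}}$ and the entries of $(D\widetilde{g_{\zeta_1}})^{-1}$ into the scalar $L^\infty$ norm to which the lemma is applied. The other two contributions reduce to $\|(D\widetilde{g_{\zeta_1}})^{-1} - (D\widetilde{g_{\zeta_2}})^{-1}\|_{L^\infty} \le Cd$, obtained by a Neumann-series comparison valid for $\delta$ small, and to the Jacobian bound above; both combine with integrability of $1/|y-\cdot|^3$ in $\real^5$ to give the $Cd\|u_a\|_{L^\infty}\|u_b\|_{L^\infty}|z|$ estimate.

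The main obstacle is already packaged inside Lemma~\ref{lem: 2.3}, whose near-field/far-field decomposition produces the characteristic $d(1+|\log d|)$ factor. Beyond invoking it, the work is essentially bookkeeping: careful telescoping on the $\zeta$-dependent factors in the integrand and uniform $L^\infty$ control of the geometric quantities $D\widetilde{\gz}$, $(D\widetilde{\gz})^{-1}$, and $\det D\widetilde{\gz}$, all of which follow from the same framework developed in \cite{SW2017} for the deformation $\gz$.
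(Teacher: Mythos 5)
Your proposal is correct and follows essentially the same route as the paper: rewrite the compositions via \eqref{eq: 290} and \eqref{eq: L^-1 alt 2}, split off the quadratic prefactor, telescope over the kernel and Jacobian (and, for the second estimate, the $(D\widetilde{\gz})^{-1}$ factor), and invoke Lemma \ref{lem: 2.3} for the differentiated kernel difference. The extra details you supply (the near-field/far-field split for the undifferentiated kernel difference and the Neumann-series bound for the inverse-Jacobian difference) are points the paper leaves implicit, but they do not change the argument.
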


\begin{proof}
We write $\{\nabla L^{-1}[u(\gzone^{-1})]\}(z)-\{\nabla L^{-1}[u(\gztwo^{-1})]\}(z)$ as
\begin{align}
&C_5 \nabla(z_1^2+z_2^2)\left(L_1^{-1}[u(\gzone^{-1})](z)-L^{-1}[u(\gztwo^{-1})](z)\right)\notag\\
&\qquad +C_5(z_1^2+z_2^2)\left(\{\nabla L_1^{-1}[u(\gzone^{-1})]\}(z)-\{\nabla L_1^{-1}[u(\gztwo^{-1})]\}(z)\right).
\end{align}
Using \eqref{eq: 290}, $L_1^{-1}[u(\gzone^{-1})](z)-L^{-1}[u(\gztwo^{-1})](z)$ can be written as
\begin{align*}
&~\int_{B_1}\frac{\tilde u(y)D  \widetilde{\gzone}(y)}{|(z_1,z_2,0,0,z_3)-\widetilde{\gzone}(y)|^3}- \frac{\tilde u(y)D  \widetilde{\gztwo}(y)}{|(z_1,z_2,0,0,z_3)-\widetilde{\gztwo}(y)|^3}~dy\notag\\
 = &~\int_{B_1}\frac{\tilde u(y)[D  \widetilde{\gzone}(y)-D  \widetilde{\gztwo}(y)]}{|(z_1,z_2,0,0,z_3)-\widetilde{\gzone}(y)|^3}~dy \notag\\
 &\quad + \int_{B_1}\tilde u(y)D  \widetilde{\gztwo}(y)\left(\frac{1}{|(z_1,z_2,0,0,z_3)-\widetilde{\gzone}(y)|^3}-\frac{1}{|(z_1,z_2,0,0,z_3)-\widetilde{\gztwo}(y)|^3}\right)~dy
\end{align*}
which is easily seen to be bounded by $Cd\|u\|_{L^\infty}$ in view of the estimates $|D  \widetilde{\gzone}(y)-D  \widetilde{\gztwo}(y)|\le Cd$ and $|\widetilde{\gzone}(y)-\widetilde{\gztwo}(y)|\le Cd$. We can treat $\{\nabla L_1^{-1}[u(\gzone^{-1})]\}(z)-\{\nabla L_1^{-1}[u(\gztwo^{-1})]\}(z)$ in a similar way, this time using $|D  \widetilde{\gzone}(y)-D  \widetilde{\gztwo}(y)|\le Cd$ and Lemma \ref{lem: 2.3} to draw the conclusion. The difference expression \eqref{est: lem 1.5.2} can be treated analogously using \eqref{eq: L^-1 alt 2}.
\end{proof}

Finally, we will also need a lemma concerning the H\"{o}lder estimate of $u(\gzone^{-1})-u(\gztwo^{-1})$.  

\begin{lemma}\label{lem: 2.4}
If $u\in C^{0,\beta}(\real^3)$ for some $0<\beta\le 1$, and $u$ is supported in $B_1$, and $\zeta_1,\zeta_2\in X_\delta$ where $\delta>0$ is sufficiently small, then $\|u(\gzone^{-1})-u(\gztwo^{-1})\|_{C^{0,\alpha}(\real^3)}\le C_\alpha\|\zeta_1-\zeta_2\|_X^{\beta-\alpha}$ for every $0<\alpha<\beta$.
\end{lemma}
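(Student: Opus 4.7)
The plan is a standard interpolation argument. Setting $f(z) := u(\gzone^{-1}(z)) - u(\gztwo^{-1}(z))$ and $d := \|\zeta_1 - \zeta_2\|_X$, I would combine an $L^\infty$ bound of order $d^\beta$ with a uniform $C^{0,\beta}$ bound of order $O(1)$ to deduce the $C^{0,\alpha}$ bound of order $d^{\beta-\alpha}$.

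First I would establish the pointwise estimate $|\gzone^{-1}(z) - \gztwo^{-1}(z)| \le Cd$ on the region where $f$ can be nonzero. Writing $y_i = g_{\zeta_i}^{-1}(z)$ and using $\gzone(y_1) = \gztwo(y_2)$, one has
\[
\gzone(y_1) - \gzone(y_2) \;=\; \gztwo(y_2) - \gzone(y_2) \;=\; y_2\,\frac{\zeta_2(y_2) - \zeta_1(y_2)}{|y_2|^2}.
\]
Since $\zeta_1(0) = \zeta_2(0) = 0$ and the $X$-norm gives $|\nabla(\zeta_1-\zeta_2)(x)| \le d|x|$, integration along the ray from $0$ to $y_2$ yields $|\gzone(y_2) - \gztwo(y_2)| \le Cd|y_2|$. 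Combined with the uniform bi-Lipschitz character of each $g_{\zeta_i}$ inherited from Lemma 3.4 of \cite{SW2017}, this gives $|y_1 - y_2| \le Cd|y_2|$. Since $u$ is supported in $B_1$, $f$ vanishes outside $\gzone(\overline{B_1}) \cup \gztwo(\overline{B_1})$, which lies in a fixed ball for $\delta$ small; on this set $|y_i|$ is uniformly bounded and so $|y_1 - y_2| \le Cd$.

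With this in hand, the two extreme bounds follow immediately. Extending $u$ by zero to $\real^3$ preserves its H\"older continuity (since $u$ vanishes on $\pa B_1$ by support and continuity), so
\[
|f(z)| \le [u]_{C^{0,\beta}} |y_1 - y_2|^\beta \le Cd^\beta,
\]
which gives $\|f\|_{L^\infty(\real^3)} \le Cd^\beta$. For the H\"older seminorm, the uniform Lipschitz control of $g_{\zeta_i}^{-1}$ yields $[u\circ g_{\zeta_i}^{-1}]_{C^{0,\beta}(\real^3)} \le C[u]_{C^{0,\beta}}$, hence $[f]_{C^{0,\beta}(\real^3)} \le C$. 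The interpolation is then routine: for $x\ne y$, if $|x-y|\le d$ I bound $|f(x)-f(y)|/|x-y|^\alpha$ by $[f]_{C^{0,\beta}}|x-y|^{\beta-\alpha} \le Cd^{\beta-\alpha}$, whereas if $|x-y|>d$ I use $|f(x)-f(y)| \le 2\|f\|_\infty \le 2Cd^\beta$ and divide by $|x-y|^\alpha > d^\alpha$ to obtain again $Cd^{\beta-\alpha}$. Together with $\|f\|_\infty \le Cd^\beta \le Cd^{\beta-\alpha}$ for $d<1$, this yields the claimed estimate.

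The main subtlety is ensuring that the bi-Lipschitz estimates for $g_{\zeta_i}$ and $g_{\zeta_i}^{-1}$ hold uniformly on all of $\real^3$, including near the origin where $\gz$ is only a homeomorphism. This is precisely what the $X$-norm is tailored for: the pointwise control $|\nabla\zeta(x)|\le \|\zeta\|_X|x|$ keeps $\zeta(x)/|x|^2$ and its first derivatives bounded, so $\nabla \gz$ stays close to the identity for small $\|\zeta\|_X$, as already established in \cite{SW2017}.
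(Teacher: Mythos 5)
Your proposal is correct and follows essentially the same route as the paper: both bound $\|f\|_{L^\infty}$ by $Cd^\beta$ via the pointwise estimate $|\gzone^{-1}-\gztwo^{-1}|\le Cd$ (which the paper simply quotes from Lemma 3.4 of \cite{SW2017}), bound the $C^{0,\beta}$ seminorm uniformly, and interpolate. Your two-case splitting at $|x-y|=d$ is just a rephrasing of the paper's multiplicative interpolation inequality, so the arguments are the same in substance.
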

\begin{remark}
Notice that $\|u(\gzone^{-1})-u(\gztwo^{-1})\|_{C^{0,\alpha}(\real^3)}$ might not tend to zero 
as $\|\zeta_1-\zeta_2\|_X\to 0$ for $\alpha=\beta$, as is suggested by the simple example 
$f_{\epsilon}(x)=|x+\epsilon|^\beta-|x|^\beta$. 
The $C^{0,\beta}$ norm of $f_\epsilon$ does not decrease as $\epsilon\to 0$.
\end{remark}
\begin{proof}
Let $v=u(\gzone^{-1})-u(\gztwo^{-1})$, 
we have $\|v\|_{C^{0,\beta}}\le C$, and $\|v\|_{\infty}\le C\|\zeta_1-\zeta_2\|_X^{\beta}$. In fact, $v(x) = u(g_{\zeta_1}^{-1}(x)) - u(g_{\zeta_2}^{-1}(x))$ is bounded by a constant multiple of $|g_{\zeta_1}^{-1}(x)-g_{\zeta_2}^{-1}(x)|^\beta$, due to the H\"older continuity of $u$. The latter is bounded by $\|\zeta_1-\zeta_2\|_X^\beta$ by the property of $g_\zeta$ given in Lemma 3.4 of \cite{SW2017}.
For any $x,y$, $x\neq y$, note that 
\[
\begin{split}
\frac{|v(x) - v(y)|}{ |x-y|^\alpha} &= \left( \frac{|v(x) - v(y)|}{ |x-y|^\beta} \right)^{\frac{\alpha}{\beta}}  |v(x) - v(y)|^{1 - \frac{\alpha}{\beta}} \\
&\leq C\|v\|_{C^{0,\beta}}^{\frac{\alpha}{\beta}}  \|v\|_\infty^{1-\frac{\alpha}{\beta}}\\
& \leq C 
\|\zeta_1-\zeta_2\|_X^{\beta-\alpha},
\end{split}
\]
from which we deduce the result. 

\end{proof}

\section{Fr\'{e}chet differentiability}

In this section, we prove the Fr\'{e}chet differentiability of $\F(\zeta,\varphi,\omega,\epsilon)$. 

\begin{theorem} \label{Frechet theorem}
The operator $\F: X_\delta^2\times \real^2\to X^2$ is continuously Fr\'echet differentiable 
if $\delta>0$ is sufficiently small .    
\end{theorem}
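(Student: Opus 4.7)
The plan is to establish the Fréchet differentiability of $\F_1$ and $\F_2$ separately by writing down the formal partial derivatives with respect to each of $\zeta,\varphi,\omega,\epsilon$, and then estimating the remainders in the $X$-norm using the compositional estimates for $\gz$ from \cite{SW2017} together with the $L^{-1}$ lemmas (Lemmas \ref{lem: 1.1}--\ref{lem: 2.4}) just established. Continuity of the differential then follows by applying the same lemmas to differences.

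For $\F_1$, the derivative of the gravitational convolution term $(\rho_\zeta*|\cdot|^{-1})(\gz(x))-(\rho_\zeta*|\cdot|^{-1})(0)$ in $\zeta$ was essentially treated in \cite{SW2017}, so I would quote that computation. The centrifugal term is polynomial in $\omega$ and in $\zeta(x)/|x|^2$, which is bounded in $L^\infty$ because $\zeta\in X$ forces $\zeta(0)=0$ and $|\zeta(x)|\le \tfrac12\|\zeta\|_X|x|^2$; hence it is trivially smooth. The enthalpy piece $h(\M(\zeta)\rho_0(x))-h(\M(\zeta)\rho_0(0))$ is $C^1$ in $\zeta$ because $h\in C^3$, $\rho_0\in C^2(\overline{B_1})$, and $\M$ is a smooth real-valued functional of $\zeta$. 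Finally $-\epsilon(K(\varphi(x))-K(\varphi(0)))$ is differentiable in $(\varphi,\epsilon)$ by $K\in C^2$, and the output lies in $X$ because $\varphi\in X$ forces $\varphi(0)=0$ and $|\varphi(x)|\le\tfrac12\|\varphi\|_X|x|^2$, so $K(\varphi(x))-K(\varphi(0))$ vanishes quadratically at the origin with the correct gradient bound.

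For $\F_2(\zeta,\varphi,\omega,\epsilon)(x)=\varphi(x)-\epsilon L^{-1}(\rho_\zeta k(\varphi\circ\gz^{-1}))(\gz(x))$, the $\omega$-derivative is zero, and the $\epsilon$-derivative is $-L^{-1}(\rho_\zeta k(\varphi\circ\gz^{-1}))(\gz(x))$, which lies in $X$ by Lemma \ref{lem: 1.1} (quadratic vanishing of $L^{-1}$ at the origin). The formal Fréchet derivative in $\varphi$ applied to $\tilde\varphi$ is $\tilde\varphi(x)-\epsilon L^{-1}(\rho_\zeta k'(\varphi\circ\gz^{-1})(\tilde\varphi\circ\gz^{-1}))(\gz(x))$; a Taylor expansion of $k$ combined with Lemma \ref{lem: 1.1} applied to the quadratic remainder, using $\|\tilde\varphi\circ\gz^{-1}\|_{L^\infty}\le C\|\tilde\varphi\|_X$, yields the required $o(\|\tilde\varphi\|_X)$ bound in the $X$-norm.

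The hard part is the $\zeta$-derivative of $\F_2$, because perturbing $\zeta\mapsto\zeta+\tilde\zeta$ simultaneously perturbs (i) the outer evaluation $\gz(x)$, (ii) the inner composition $\gz^{-1}$ inside the merely bounded function $k(\varphi\circ\cdot)$, and (iii) the density $\rho_\zeta=\M(\zeta)\rho_0\circ\gz^{-1}$ itself. Formally the derivative splits via the product and chain rules into three contributions, which are all well-defined linear maps on $X$ by the estimates of Section \ref{sec: background}. To bound the remainder in $X$, I would first use the alternative representations \eqref{eq: 290} and \eqref{eq: L^-1 alt 2} to move the inner $\gz^{-1}$ out of the rough argument, then combine Lemma \ref{lem: 1.2} for the outer composition, Lemma \ref{lem: 1.5} for the difference of the two $L^{-1}$-compositions (which is exactly designed for this situation, giving the $d(1+|\log d|)$ control), and Lemma \ref{lem: 2.4} to compare $k(\varphi\circ g_{\zeta+\tilde\zeta}^{-1})$ with $k(\varphi\circ\gz^{-1})$ in a Hölder norm despite only having $L^\infty$ control on the underlying function. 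Each resulting estimate carries the factor $|z|$ on the right-hand side thanks to the quadratic vanishing of $L^{-1}$ at the origin (Lemma \ref{lem: 1.1}), so that after dividing by $|z|$ (as demanded by the $X$-norm) the remainder is $o(\|\tilde\zeta\|_X)$. The main obstacle, and precisely the reason Section \ref{sec: background} was developed, is this $\zeta$-derivative: it is the one place where the outer composition, the inner composition in a rough argument, and the $|z|$-weighted norm interact simultaneously, and only the combined use of Lemmas \ref{lem: 1.1}, \ref{lem: 1.2}, \ref{lem: 1.5}, and \ref{lem: 2.4} closes the estimate. Continuity of the full Fréchet derivative then follows by repeating the same difference estimates with $(\zeta,\varphi)$ and $(\zeta',\varphi')$ in place of $\zeta$ and $\zeta+\tilde\zeta$.
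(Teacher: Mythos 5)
Your overall architecture matches the paper's: compute the formal partial derivatives, verify they are bounded on $X$, and lean on the $L^{-1}$ machinery of Section \ref{sec: background}; you also correctly identify the $\zeta$-derivative of $\F_2$ as the crux, where the outer composition $\gz(x)$, the inner composition $\gz^{-1}$ inside the merely bounded argument of $k$, and the weighted $X$-norm interact. The paper's proof is organized exactly around Lemmas \ref{lem: 2.1}, \ref{lem: 2.2}, \ref{lem: continuity Frechet} and the representations \eqref{eq: 290}, \eqref{eq: L^-1 alt 2}, as you anticipate.

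However, there is a genuine gap in how you pass from the difference estimates to the Fr\'echet remainder bound. Lemmas \ref{lem: 1.2}, \ref{lem: 1.5} and \ref{lem: 2.4} are \emph{first-order} (Lipschitz, H\"older, or $d(1+|\log d|)$) estimates on differences such as $\nabla L^{-1}[u(\gzone^{-1})]-\nabla L^{-1}[u(\gztwo^{-1})]$. Applied directly they give $\|\F_2(\zeta+\tilde\zeta,\cdot)-\F_2(\zeta,\cdot)\|_X=O(\|\tilde\zeta\|_X)$ and the continuity of the formal derivative, but they do \emph{not} by themselves produce the superlinear smallness $o(\|\tilde\zeta\|_X)$ of $\F_2(\zeta+\tilde\zeta)-\F_2(\zeta)-\frac{\partial\F_2}{\partial\zeta}\tilde\zeta$; your sentence ``after dividing by $|z|$ \dots the remainder is $o(\|\tilde\zeta\|_X)$'' asserts the conclusion without supplying the mechanism. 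The paper closes this by proving the formal derivative is H\"older-$\alpha$ continuous in the operator norm (Lemma \ref{lem: continuity Frechet}) and then writing, for each fixed $x$, $\partial_i\F(\zeta+\xi,\varphi+\eta)(x)-\partial_i\F(\zeta,\varphi)(x)=\partial_s\partial_i\F(\zeta+s\xi,\varphi+s\eta)(x)\big|_{s=\theta(x)}$ by the mean value theorem, which yields the bound $C(\|\xi\|_X+\|\eta\|_X)^{1+\alpha}$ in Lemma \ref{lem: Frechet}. This step requires interchanging $\partial_s$ and $\partial_i$, which is not free because the maps are never shown to be jointly $C^2$; the paper devotes Lemma \ref{lem: mixed partial} to justifying the interchange via the explicit representations and dominated convergence. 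Your proposal omits both this mean-value/interchange step and any substitute for it (e.g., a second-order Taylor expansion of $g_{\zeta+\tilde\zeta}^{-1}$, $\det D\widetilde{g_{\zeta+\tilde\zeta}}$ and $\M(\zeta+\tilde\zeta)$ with controlled remainders), so as written the $\zeta$-differentiability of $\F_2$ is not established. Note also that in the paper the continuity of the derivative is not an afterthought to be ``repeated at the end'': it is the essential input to differentiability itself.
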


\subsection{$\F$ maps into $X^2$}

\begin{lemma}\label{lem: 2.1} There exists a constant $C>0$ depending on $\rho_0$, $k$, and $\delta$ such that 
\eqn
\|\F_1(\zeta,\varphi,\omega,\epsilon)\|_X \leq C(1+\omega^2+\epsilon) \;\;\text{and}\;\; \|\F_2(\zeta,\varphi,\epsilon) \|_X \leq C (1+\epsilon)
\eeqn
if $\zeta\in X_\delta$ and $\varphi\in X_\delta$ for sufficiently small $\delta>0$. 
\end{lemma}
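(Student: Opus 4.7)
The plan is to estimate each term of $\F_1$ and $\F_2$ separately and verify the three defining conditions of $X$: value zero at the origin, axisymmetry and evenness in $x_3$, and finiteness of $\sup_{x\in\dot B_1}|\nabla f(x)|/|x|$. The symmetry conditions are automatic since $\rho_0$, $\zeta$, $\varphi$ are axisymmetric and even in $x_3$, $g_\zeta$ preserves these symmetries, and $L^{-1}$ inherits them from its representation as convolution with a radial 5D kernel times $(z_1^2+z_2^2)$. The vanishing at $x=0$ follows term-by-term once each contribution is analyzed.

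For $\F_1$, I would handle the four contributions as follows. For the gravitational piece $U(z):=(\rho_\zeta*|\cdot|^{-1})(z)$, since $\rho_\zeta\in L^\infty$ with compact support the potential $U$ lies in $C^{1,\alpha}_{\mathrm{loc}}$; moreover $U$ is axisymmetric and even in $x_3$, which forces $\nabla U(0)=0$, and hence $|\nabla U(z)|\le C|z|$ on a bounded set. Composing with $g_\zeta$ (which is bi-Lipschitz with $|g_\zeta(x)|\le C|x|$, see Lemma 3.4 of \cite{SW2017}) and applying the chain rule gives $|\nabla_x[U(g_\zeta(x))-U(0)]|\le C|x|$. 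For the centrifugal term, the key observation is that $r^2(1+\zeta/|x|^2)^2=(g_\zeta)_1^2+(g_\zeta)_2^2$, which is the squared cylindrical radius of $g_\zeta(x)$; its gradient equals $2(g_\zeta)_1\nabla(g_\zeta)_1+2(g_\zeta)_2\nabla(g_\zeta)_2$, bounded by $C|x|$, so it contributes $C\omega^2$ to the $X$-norm. For the enthalpy piece, $h(\M(\zeta)\rho_0(x))-h(\M(\zeta)\rho_0(0))$ is radial and its gradient is $h'(\M(\zeta)\rho_0)\M(\zeta)\nabla\rho_0(x)$; since $\rho_0$ is $C^2$ and radial, $\nabla\rho_0(0)=0$ and $|\nabla\rho_0(x)|\le C|x|$, so this contributes $O(1)$. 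For the magnetic piece, $\nabla_x K(\varphi(x))=k(\varphi(x))\nabla\varphi(x)$; since $\varphi\in X_\delta$ gives $|\nabla\varphi|\le\delta|x|$ and $\varphi(0)=0$ together with $k\in C^2$ bound $|k(\varphi)|$ uniformly, the contribution is $C\epsilon$. Summing yields $\|\F_1\|_X\le C(1+\omega^2+\epsilon)$.

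For $\F_2$, write $f:=\rho_\zeta\,k(\varphi\circ g_\zeta^{-1})$, which is compactly supported in $g_\zeta(\overline{B_1})\subset B_2$ for $\delta$ small, and bounded by $C$ depending only on $\|\rho_0\|_\infty$, $\M(\zeta)$ and the sup of $|k|$ on $[-\|\varphi\|_\infty,\|\varphi\|_\infty]$. By Lemma \ref{lem: 1.1}, $|\nabla L^{-1}f(z)|\le C\|f\|_{L^\infty(B_2)}|z|$; then $|\nabla_x\{L^{-1}f(g_\zeta(x))\}|\le C|g_\zeta(x)|\,\|Dg_\zeta\|_\infty\le C|x|$, while $L^{-1}f(g_\zeta(0))=L^{-1}f(0)=0$. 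Combining with $\|\varphi\|_X\le\delta$ gives $\|\F_2\|_X\le \delta+C\epsilon\le C(1+\epsilon)$.

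The main subtlety — which is really the only point where one can get tripped up — is the apparent $1/|x|^2$ singularity in the centrifugal term $r^2(1+\zeta/|x|^2)^2$: naively differentiating $\zeta/|x|^2$ produces a $1/|x|$ singularity that would break the $X$-norm bound. The resolution is to recognize the whole expression as the smooth composite $(g_\zeta)_1^2+(g_\zeta)_2^2$, which is quadratic in $g_\zeta(x)$ and therefore quadratic in $x$. Everything else reduces to routine application of the chain rule together with the vanishing of $\nabla U(0)$ and $\nabla\rho_0(0)$ forced by the symmetries.
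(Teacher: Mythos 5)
Your treatment of $\F_2$ is essentially the paper's: bound $\rho_\zeta\,k(\varphi\circ\gz^{-1})$ in $L^\infty$, invoke Lemma \ref{lem: 1.1} to get $|\nabla L^{-1}f(z)|\le C\|f\|_{L^\infty}|z|$, and compose with $\gz$ using the bounds on $\gz$ and $D\gz$ from Lemma 3.4 of \cite{SW2017}. For $\F_1$ the paper simply cites Lemma 5.1 of \cite{SW2017} for all terms except $-\epsilon K(\varphi(x))+\epsilon K(\varphi(0))$, whereas you argue term by term; your centrifugal and magnetic estimates are fine (and your identity $r^2(1+\zeta/|x|^2)^2=(\gz)_1^2+(\gz)_2^2$ is exactly the right way to kill the apparent $1/|x|$ singularity). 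But your self-contained treatment of the other two terms has gaps.

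First, the enthalpy term. You write $\nabla\bigl[h(\M(\zeta)\rho_0)\bigr]=h'(\M(\zeta)\rho_0)\,\M(\zeta)\nabla\rho_0$ and conclude an $O(1)$ contribution from $|\nabla\rho_0(x)|\le C|x|$, implicitly treating $h'(\M(\zeta)\rho_0(x))$ as bounded. It is not: $h'(\rho)=p'(\rho)/\rho$, which for $p=\rho^\gamma$ equals $\gamma\rho^{\gamma-2}$ and blows up as $\rho\to0$, i.e.\ precisely at the vacuum boundary $\partial B_1$, since $1<\gamma<2$. (Relatedly, $\rho_0$ is only $C^{1,\alpha}$ up to $\partial B_1$, not $C^2(\overline{B_1})$, so "$|\nabla\rho_0(x)|\le C|x|$'' near $|x|=1$ is not a statement about smoothness at the origin but about boundary behavior.) The product $h'(\M\rho_0)\nabla\rho_0$ is in fact bounded, but showing this requires the structure of the Lane--Emden solution — e.g.\ the identity $h(\rho_0)=\rho_0*|x|^{-1}+\text{const}$ on $B_1$, which transfers the needed regularity of $h\circ\rho_0$ to that of a Newtonian potential of a H\"older density — and for general $\M(\zeta)\ne1$ and general $p$ an additional argument. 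This is exactly the delicate point that Lemma 5.1 of \cite{SW2017} handles, and your proof as written skips it.

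Second, a smaller gap in the gravitational term: from $\rho_\zeta\in L^\infty$ with compact support you only get $U\in C^{1,\alpha}_{\mathrm{loc}}$ for $\alpha<1$, and then $\nabla U(0)=0$ yields only $|\nabla U(z)|\le C|z|^\alpha$, which is not enough for the $X$-norm. You need $\nabla U$ Lipschitz near the origin, i.e.\ $U\in C^{2,\alpha}_{\mathrm{loc}}$, which does hold here because $\rho_\zeta=\M(\zeta)\rho_0(\gz^{-1})$ is H\"older continuous ($\rho_0\in C^{1,\alpha}(\real^3)$ and $\gz^{-1}$ is Lipschitz) — but that is the fact you must use, not mere boundedness of $\rho_\zeta$.
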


\begin{proof} We start with $\F_1$. The terms except $- \epsilon K(\varphi(x))+\epsilon K(\varphi(0))$ in $\F_1$ have been shown to map into $X$ in \cite{SW2017} (cf. Lemma 5.1 in \cite{SW2017}).  In order for $- \epsilon K(\varphi(x))+\epsilon K(\varphi(0))$ to also map into $X$, it suffices to show that $|K'(\varphi(x))\nabla \varphi(x)|$ is bounded by $C|x|$. But this immediately follows from the fact that $\varphi\in X$ and $|K'(\varphi(x))|$ is bounded. 

We next move on to $\F_2$. Let us rewrite $\F_2$ as 
\eqn\label{eq: 12}
\F_2(\zeta,\varphi,\omega,\epsilon) =\varphi(x) - \epsilon \M(\zeta ) \F_3(\zeta,\varphi) 
\eeqn
where 
\eqn\label{eq: 63}
\F_3(\zeta,\varphi) := L^{-1}(\rho_0(  \gz^{-1})\ k(\varphi(\gz^{-1})))(\gz(x)) . 
\eeqn
Since $\varphi \in X$, we only need to show that $\F_3$ maps into $X$. To this end, we compute 
the spatial derivative of \eqref{eq: 63}:
\eqn
\partial_i \F_3(\zeta,\varphi) = \left[\nabla L^{-1}(\rho_0(  \gz^{-1})\ k(\varphi(\gz^{-1})))\right](\gz(x))\cdot \partial_i\gz(x).
\eeqn
Since $|\gz(x)-x|\leq \|\zeta\|_X |x|$ and $|\partial_i \gz (x)|\leq C(1+\|\zeta\|_X)$ (cf. Lemma 3.4 in \cite{SW2017}), in order to bound $|\partial_i \F_3|$ by $C|x|$, it is sufficient to show that $\left[\nabla L^{-1}(\rho_0(  \gz^{-1}) k(\varphi(\gz^{-1})))\right](z)$ is bounded by $C|z|$. This, in turn, is a consequence of Lemma \ref{lem: 1.1}.

\end{proof}

\subsection{Formal derivative of $\F$}

To simplify notation, we suppress the $\omega$, $\epsilon$ dependence in $\F$.  
Another reason for doing so is that the differentiability with respect to $\omega$ and $\epsilon$ is simpler. 
Therefore, for the moment, we think of them as being fixed. 
Let $\zeta$, $\varphi \in X_\delta$ and $\xi,$ $\eta\in X$ be given. 
Consider $s\in \real$ in a sufficiently small neighborhood of $0$ so that 
$\zeta+s\xi$, $\varphi+s\eta\in X_\delta$. We define the {\it formal derivatives} of $\F$ 
with respect to $\zeta$, and $\varphi$  respectively as the pointwise limits  
\begin{equation}\label{def: formal deriv}
\begin{split}
&\left[\frac{\pa \F}{\pa \zeta}(\zeta,\varphi)\xi \right](x)  = \partial_s\bigg|_{s=0}\F(\zeta+s\xi,\varphi)(x),\\
& \left[\frac{\pa \F}{\pa \varphi}(\zeta,\varphi)\eta \right](x)  = \partial_s\bigg|_{s=0}\F(\zeta,\varphi+s\eta)(x), 
\end{split}
\end{equation}
for every {\it fixed} $x$. We do not yet claim that this is a Gateaux derivative, which would require 
a specific condition making use of the norm, while for the time being 
we are only defining it pointwise.
The formal derivative of $\F_1$ with respect to $\zeta$ was computed in Lemma 5.2 in \cite{SW2017}.  
It is easy to see that the formal derivative of $\F_1$ with respect to $\varphi$ is   
\eqn\label{eq: 17}
 \left[\frac{\pa \F_1}{\pa \varphi}(\zeta,\varphi)\eta \right](x) = - \epsilon K'(\varphi(x)) \eta(x) 
\eeqn

As for $\F_2$ it is clear from \eqref{eq: 12} that
\eqn
 \left[\frac{\pa \F_2}{\pa \zeta}(\zeta,\varphi)\xi \right] = - \epsilon [\M'(\zeta)\xi] \F_3(\zeta,\varphi)  - \epsilon \M(\zeta) 
 \left[\frac{\partial \F_3}{\partial \zeta}(\zeta,\varphi)\xi \right]
\eeqn
and 
\eqn
 \left[\frac{\pa \F_2}{\pa \varphi}(\zeta,\varphi)\eta \right] = \eta   - \epsilon \M(\zeta) 
 \left[\frac{\partial \F_3}{\partial \varphi}(\zeta,\varphi)\eta \right]
\eeqn
where $\F_3$ is given in  \eqref{eq: 63}.  
According to (5.18) in \cite{SW2017}, $\M'(\zeta) \xi$ is given by  
\eqn   \label{eq: M'}
\begin{split}
\M'(\zeta) \xi&=\frac{-M}{\left(\int_{B_1}\rho_0(x) \det Dg_{\zeta}(x)~dx\right)^2} \times  \\
&\quad  \times \int_{B_1}\rho_0(x)\det Dg_{\zeta}(x)   
\tr \left[(Dg_{\zeta})^{-1}(x) D\left(\xi(x)\frac{x}{|x|^2}\right)\right]~dx 
\end{split}
\eeqn 

The computation of $\left[\frac{\partial \F_3}{\partial \zeta}(\zeta,\varphi)\xi \right]$ is similar to the one 
for the first term in $\F_1$, both of which are integral operators involving $\rho_0(\gz^{-1})$. 
Using the formula (see (5.30) of \cite{SW2017}),
\eqn
\partial_s g_{\zeta+s\xi}^{-1}(y) = -Dg_{\zeta+s\xi}^{-1}(y)\xi(g_{\zeta+s\xi}^{-1}(y))\frac{g_{\zeta+s\xi}^{-1}(y)}{|g_{\zeta+s\xi}^{-1}(y)|^2},
\eeqn
we deduce that 
\begin{align}
&~\left[\frac{\partial \F_3}{\partial \zeta}(\zeta,\varphi)\xi \right](x) \notag\\
=&~-L^{-1}\bigg[\left(\nabla\rho_0(\gz^{-1})k(\varphi(\gz^{-1}))+\rho_0(\gz^{-1})k'(\varphi(\gz^{-1})) \nabla \varphi(\gz^{-1})\right)\cdot\notag\\
&~\qquad\qquad D( \gz^{-1})\cdot\frac{\xi(\gz^{-1})\gz^{-1}}{|\gz^{-1}|^2}\bigg](\gz(x))\notag\\
&~+\left[\nabla L^{-1}(\rho_0(\gz^{-1}) k(\varphi(\gz^{-1})))\right](\gz(x))\cdot \frac{\xi(x)}{|x|^2}x.
\end{align}
Lastly, the formal derivative with respect to $\varphi$ is given by 
\eqn
\left[\frac{\partial \F_3}{\partial \varphi}(\zeta,\varphi)\eta\right](x) = L^{-1} \left[ \rho_0(\gz^{-1})k'(\varphi(\gz^{-1})) \eta (\gz^{-1}) \right] (\gz(x)). 
\eeqn
A rigorous justification of the above formulas involving the cut-off function method 
may be found in Lemma 5.2. of \cite{SW2017}. The details are omitted.

Next we will show that the formal derivative $\frac{\pa \F}{\pa (\zeta,\varphi)}$, just computed,  
is a  bounded linear map on $X\times X$.

\begin{lemma}\label{lem: 2.2}
If $\zeta$, $\varphi\in X_\delta$ and $\delta$ is sufficiently small, there exists a constant $C>0$ such that 
\eqn\label{eq: 23}
\left\| \frac{\pa \F_1}{\pa \zeta}  (\zeta,\varphi)\xi \right \|_X\leq C (1+\omega^2)\|\xi\|_X, \;\; 
\left\| \frac{\pa \F_1}{\pa \varphi}  (\zeta,\varphi)\eta \right \|_X\leq C \epsilon \|\eta \|_X
\eeqn
and 
\eqn\label{eq: 24}
\left\| \frac{\pa \F_2}{\pa \zeta}  (\zeta,\varphi)\xi \right \|_X\leq C \epsilon \|\xi\|_X, \;\; 
\left\| \frac{\pa \F_2}{\pa \varphi}  (\zeta,\varphi)\eta  \right\|_X\leq C (1+\epsilon )\|\eta \|_X. 
\eeqn
\end{lemma}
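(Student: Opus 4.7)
The plan is to estimate each of the four partial derivatives separately, using the explicit formulas just recorded and the observation that every $w \in X_\delta$ satisfies $w(0) = 0$ and $|\nabla w(x)| \le \delta|x|$, so that $|w(x)| \le \tfrac12 \|w\|_X |x|^2$ and $w$ is uniformly bounded on $\overline{B_1}$. The bound on $\frac{\partial \F_1}{\partial\zeta}$ reduces to Lemma~5.2 of \cite{SW2017}, since the new magnetic term $-\epsilon(K(\varphi)-K(\varphi(0)))$ is $\zeta$-independent, while the factor $1+\omega^2$ arises from differentiating the centrifugal contribution $\tfrac12\omega^2(x_1^2+x_2^2)(1+\zeta/|x|^2)^2$ in $\zeta$. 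For $\frac{\partial \F_1}{\partial\varphi}(\zeta,\varphi)\eta = -\epsilon K'(\varphi)\eta$, a direct chain-rule computation bounds
\[
|\nabla(K'(\varphi)\eta)| \le |K''(\varphi)||\nabla\varphi||\eta| + |K'(\varphi)||\nabla\eta| \le C\|\eta\|_X |x|,
\]
using boundedness of $K'(\varphi)$ and $K''(\varphi)$ together with the pointwise estimates above.

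For $\F_2 = \varphi - \epsilon\M(\zeta)\F_3$, the identity piece of $\frac{\partial\F_2}{\partial\varphi}\eta$ contributes $\|\eta\|_X$, and the $[\M'(\zeta)\xi]\F_3$ piece of $\frac{\partial\F_2}{\partial\zeta}\xi$ is handled using $|\M'(\zeta)\xi| \le C\|\xi\|_X$ (from the explicit formula for $\M'$) together with $\|\F_3\|_X \le C$ (established in the proof of Lemma~\ref{lem: 2.1}). The piece $\epsilon\M(\zeta)\frac{\partial\F_3}{\partial\varphi}\eta$ is $L^{-1}$ of a function uniformly bounded by $C\|\eta\|_X$ on $\text{supp}\,\rho_0$ (since $|\eta(\gz^{-1}(z))| \le C\|\eta\|_X|\gz^{-1}(z)|^2$), evaluated at $\gz(x)$; Lemma~\ref{lem: 1.1} combined with $|\gz(x)|\le C|x|$ and $|D\gz|\le C$ yields the $O(\|\eta\|_X|x|)$ gradient bound needed to close the $\frac{\partial \F_2}{\partial\varphi}$ estimate.

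The main obstacle is the remaining piece $\epsilon\M(\zeta)\frac{\partial\F_3}{\partial\zeta}\xi$, whose explicit formula splits into two integrals. In the first, the interior of $L^{-1}$ is uniformly bounded by $C\|\xi\|_X$ on $\text{supp}\,\rho_0$ via the key observation $|\xi(w)w/|w|^2| \le C\|\xi\|_X|w|$, combined with boundedness of $\nabla\rho_0$, $k(\varphi)$, $\rho_0 k'(\varphi)\nabla\varphi$, and $D(\gz^{-1})$; Lemma~\ref{lem: 1.1} and the chain rule through $\gz$ then give the required $O(\|\xi\|_X|x|)$ gradient. The second integral is $V(\gz(x))\cdot\xi(x)x/|x|^2$ with $V = \nabla L^{-1}[\rho_0(\gz^{-1})k(\varphi(\gz^{-1}))]$; differentiating in $x$ produces one term containing $V(\gz(x))$ itself, controlled by Lemma~\ref{lem: 1.1} together with $|\nabla(\xi(x)x/|x|^2)| \le C\|\xi\|_X$, and one term containing $(\nabla V)(\gz(x)) = (\nabla^2 L^{-1})(\cdots)(\gz(x))$. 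For the latter we invoke the Schauder estimate of Lemma~\ref{lem: D2 L inv}, whose Hölder hypothesis holds because $\rho_0 \in C^{1,\alpha}$, $\varphi \in C^1$, and $\gz^{-1}$ is Lipschitz on $\overline{B_2}$ (cf.\ Lemma~3.4 of \cite{SW2017}). Verifying that the resulting Hölder seminorm depends only on $\delta$ and on the fixed data $\rho_0, k$ is the main technical point; once this is in place, the factor $|\xi(x)x/|x|^2| \le C\|\xi\|_X|x|$ absorbs the remaining $|x|$ and completes the $X$-norm bound.
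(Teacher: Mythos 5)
Your proposal is correct and follows essentially the same route as the paper: the $\F_1$ bounds reduce to the corresponding estimate in \cite{SW2017} plus the elementary chain-rule bound on $K'(\varphi)\eta$, and the $\F_2$ bounds use the same decomposition into the $\M'(\zeta)\xi$ piece and the two derivative terms of $\F_3$, with $\nabla L^{-1}$ controlled by Lemma~\ref{lem: 1.1} and the $\nabla^2 L^{-1}$ term controlled by the Schauder estimate of Lemma~\ref{lem: D2 L inv} applied to the Lipschitz function $\rho_0(\gz^{-1})k(\varphi(\gz^{-1}))$.
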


\begin{proof} The first inequality in \eqref{eq: 23} was derived in Lemma 5.5 in \cite{SW2017}. The second inequality in \eqref{eq: 23} directly follows from \eqref{eq: 17} since $|K'(\varphi(x))|$ is bounded. The rest of the proof is devoted to \eqref{eq: 24}. It suffices to prove the boundedness of $\frac{\pa \F_3}{\pa \zeta}  (\zeta,\varphi)$ and $\frac{\pa \F_3}{\pa \varphi}  (\zeta,\varphi)$ on $X$. 

We start with $\frac{\pa \F_3}{\pa \zeta}  (\zeta,\varphi)$.  
To estimate its $X$ norm, we take the spatial derivatives  
\begin{align}
&~\partial_i \left[\frac{\partial \F_3}{\partial \zeta}(\zeta,\varphi)\xi \right](x)\notag\\
=&~-\nabla L^{-1}\bigg[\left(\nabla\rho_0(\gz^{-1})k(\varphi(\gz^{-1}))+\rho_0(\gz^{-1})k'(\varphi(\gz^{-1}))\nabla \varphi(\gz^{-1})\right)\cdot\notag\\
&~\qquad\qquad D \gz^{-1}\cdot\frac{\xi(\gz^{-1})\gz^{-1}}{|\gz^{-1}|^2}\bigg](\gz(x))\cdot\partial_i \gz(x)\label{eq: 68}\\
&~+\left[\nabla^2 L^{-1}(\rho_0(\gz^{-1}) k(\varphi(\gz^{-1})))\right](\gz(x)) \partial_i \gz(x)\cdot \frac{\xi(x)}{|x|^2}x\label{eq: 69}\\
&~+\left[\nabla L^{-1}(\rho_0(\gz^{-1}) k(\varphi(\gz^{-1})))\right](\gz(x))\cdot \partial_i\left(\frac{\xi(x)}{|x|^2}x\right).\label{eq: 70}
\end{align}
We claim, starting with \eqref{eq: 68}, that each term in the right-hand side is bounded by $C\|\xi\|_X|x|$. 
Due to Lemma \ref{lem: 1.1} and the estimates $|\gz(x)-x|\leq \|\zeta\|_X |x|$ and 
$|\partial_i \gz (x)|\leq C(1+\|\zeta\|_X)$ (cf. Lemma 3.4 in \cite{SW2017}), we can bound \eqref{eq: 68} by 
\[
\begin{split}
|\eqref{eq: 68}| \leq C\bigg\|\left[ \nabla\rho_0(\gz^{-1})k(\varphi(\gz^{-1}))+\rho_0(\gz^{-1})k'(\varphi(\gz^{-1}))\nabla \varphi(\gz^{-1})\right]  \cdot  & \\
  \nabla \gz^{-1}\cdot\frac{\xi(\gz^{-1})\gz^{-1}}{|\gz^{-1}|^2}& \bigg\|_{L^\infty} |x| 
\end{split}
\]
Note that the first factor is bounded in the $L^\infty$ norm due to the assumptions on $k$ and $\varphi$. 
As for the second factor, since $\nabla\gz^{-1}$ is bounded and since $|\gz^{-1}(y)|$ is comparable to $|y|$, 
and ${|\xi(y) y|}/{|y|^2}$ is dominated by $C\|\xi\|_X$, we have 
$\| \nabla \gz^{-1}\cdot {\xi(\gz^{-1})\gz^{-1}}/{|\gz^{-1}|^2} \|_{L^\infty} \leq C \|\xi\|_X$. 

For \eqref{eq: 69}, since $|\partial_i \gz(x)\cdot ({\xi(x)}/{|x|^2})x|\leq C \|\xi\|_X|x|$, it is enough to have 
 $\nabla^2 L^{-1}(\rho_0(\gz^{-1}) k(\varphi(\gz^{-1})))(z)$ be bounded and continuous. 
 With our assumptions on $\zeta$ and $\varphi$, $\rho_0(\gz^{-1}) k(\varphi(\gz^{-1}))$ 
 is in $C^1$ and therefore \eqref{eq: 69} satisfies the desired estimate due to Lemma \ref{lem: D2 L inv}. 
 As for \eqref{eq: 70}, since $|\partial_i\left(\frac{\xi(x)}{|x|^2}x\right)|\leq C\|\xi\|_X$, 
 it is enough to have $|\nabla L^{-1}(\rho_0(\gz^{-1}) k(\varphi(\gz^{-1})))(z)|\leq C|z|$, 
 which  is indeed the case by Lemma \ref{lem: 1.1}, using the 
 fact that $\rho_0(\gz^{-1}) k(\varphi(\gz^{-1}))$ is in $L^{\infty}$.  
 
Finally we examine the spatial derivative $\frac{\pa \F_3}{\pa \varphi}  (\zeta,\varphi)$: 
\begin{align}
&~\pa_i\left[\frac{\partial \F_3}{\partial \varphi}(\zeta,\varphi)\eta\right](x) \notag\\
=&~~ \nabla L^{-1} \left[ \rho_0(\gz^{-1})k'(\varphi(\gz^{-1})) \eta (\gz^{-1}) \right] (\gz(x)) \cdot \pa_i \gz(x) . 
\end{align}
Since $\| \rho_0(\gz^{-1})k'(\varphi(\gz^{-1})) \eta (\gz^{-1})\|_{L^\infty} \leq C \| \eta\|_X$, 
from Lemma \ref{lem: 1.1} we deduce that $\|\frac{\pa \F_3}{\pa \varphi}  (\zeta,\varphi)\eta\|_X\leq C \|\eta\|_X|x|$. 
This completes the proof of the lemma. 
\end{proof}

\subsection{Continuity of $\F'$}

Now we establish the continuity of the formal derivative 
$\frac{\partial \F}{\partial (\zeta,\varphi,\omega,\epsilon)}$ from  
$X_\delta\times X_\delta\times (-\delta,\delta)\times (-\delta,\delta)$ into $X\times X$. 

\begin{lemma}  
\label{lem: continuity Frechet}
If $\zeta$, $\varphi\in X_\delta$ and $\delta$ is sufficiently small, 
there exist constants $C>0$, $0<\alpha<1$, and a constant $C_\beta$ for each $0<\beta<1$ such that 
\begin{align}
\left\| \left( \frac{\pa \F_1}{\pa \zeta}  (\zeta_1,\varphi_1) -  \frac{\pa \F_1}{\pa \zeta}  (\zeta_2,\varphi_2)\right)\xi  \right\|_X&\leq C \|\zeta_1-\zeta_2\|_X^\alpha \|\xi\|_X,\label{eq: 29}  \\
\left\| \left( \frac{\pa \F_1}{\pa \varphi}  (\zeta_1,\varphi_1) - \frac{\pa \F_1}{\pa \varphi}  (\zeta_2,\varphi_2) \right) \eta  \right\|_X &\leq C \epsilon \|\varphi_1-\varphi_2\|_X  \|\eta \|_X, \label{eq: 30}\\
\left\| \left( \frac{\pa \F_2}{\pa \zeta}  (\zeta_1,\varphi_1) -  \frac{\pa \F_2}{\pa \zeta}  (\zeta_2,\varphi_2)\right) \xi  \right \|_X &\leq C_\beta \epsilon 
(\|\zeta_1-\zeta_2\|_X^\beta \label{eq: 31}\\
&\qquad \quad+\|\varphi_1-\varphi_2\|_X^\beta) \|\xi\|_X, \notag\\
\left\| \left( \frac{\pa \F_2}{\pa \varphi}  (\zeta_1,\varphi_1)- \frac{\pa \F_2}{\pa \varphi}  (\zeta_2,\varphi_2)\right) \eta  \right\|_X &\leq C_\beta\epsilon 
(\|\zeta_1-\zeta_2\|_X^\beta \label{eq: 32}\\
&\qquad \quad +\|\varphi_1-\varphi_2\|_X ) \|\eta \|_X, \notag
\end{align}
\end{lemma}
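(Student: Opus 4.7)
The plan is to establish the four estimates separately, as they fall into three distinct classes of difficulty.

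For \eqref{eq: 29}, observe that $\F_1$'s dependence on $\varphi$ is confined to the term $-\epsilon K(\varphi(x))+\epsilon K(\varphi(0))$, which is independent of $\zeta$. Therefore $\frac{\pa \F_1}{\pa \zeta}(\zeta,\varphi)\xi$ is independent of $\varphi$ and coincides exactly with the linear operator studied in \cite{SW2017}. Consequently \eqref{eq: 29} is precisely the continuity estimate proved there (cf.\ Theorem 5.6 of \cite{SW2017}); the factor $(1+\omega^2)$ tracks the rotational contribution $\tfrac12\omega^2(x_1^2+x_2^2)(1+\zeta/|x|^2)^2$. No new work is required beyond citing the SW2017 argument.

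For \eqref{eq: 30}, the explicit formula \eqref{eq: 17} gives
\eqn
\left(\tfrac{\pa \F_1}{\pa \varphi}(\zeta_1,\varphi_1)-\tfrac{\pa \F_1}{\pa \varphi}(\zeta_2,\varphi_2)\right)\eta = -\epsilon\bigl(K'(\varphi_1)-K'(\varphi_2)\bigr)\eta.
\eeqn
Taking $\nabla$ and applying Leibniz produces three terms: $(K''(\varphi_1)-K''(\varphi_2))\nabla\varphi_1\cdot\eta$, $K''(\varphi_2)(\nabla\varphi_1-\nabla\varphi_2)\cdot\eta$, and $(K'(\varphi_1)-K'(\varphi_2))\nabla\eta$. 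Since $k\in C^2$ makes $K\in C^3$, and since $\eta,\varphi_i\in X_\delta$ vanish at the origin (so $\|\eta\|_{L^\infty(B_1)}\le C\|\eta\|_X$ and $|\nabla\varphi_i(x)|\le\delta|x|$), each of the three terms is pointwise bounded by $C\epsilon\|\varphi_1-\varphi_2\|_X\|\eta\|_X|x|$. Dividing by $|x|$ yields \eqref{eq: 30}.

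For \eqref{eq: 31} and \eqref{eq: 32}, the heart of the argument is a decomposition of the differences of $\frac{\pa \F_2}{\pa \zeta}$ and $\frac{\pa \F_2}{\pa \varphi}$ by adding and subtracting intermediate terms, so that each piece varies in only one of $\zeta$ or $\varphi$. Starting from $\frac{\pa \F_2}{\pa \zeta}(\zeta,\varphi)\xi = -\epsilon[\M'(\zeta)\xi]\F_3(\zeta,\varphi) - \epsilon\M(\zeta)\frac{\pa \F_3}{\pa \zeta}(\zeta,\varphi)\xi$, the $\M'$-piece is handled by the continuity properties of $\M(\zeta)$ and $\M'(\zeta)$ from \cite{SW2017} together with Lemma \ref{lem: 2.1} to control $\F_3$ in $X$. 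For the $\M(\zeta)\frac{\pa \F_3}{\pa \zeta}$-piece, after differentiating in $x$ we obtain the three groups \eqref{eq: 68}--\eqref{eq: 70}, and each is split further: differences of $\nabla L^{-1}[u\circ g^{-1}]$ at the two distinct points $g_{\zeta_1}(x)$ and $g_{\zeta_2}(x)$ are covered by Lemma \ref{lem: 1.2}; differences produced by changing the interior deformation $g_{\zeta_1}^{-1}$ to $g_{\zeta_2}^{-1}$ inside $L^{-1}$ are covered by Lemma \ref{lem: 1.5}; differences in $\varphi$ enter through $u=\rho_0\cdot k(\varphi)$ and its Lipschitz dependence on $\varphi$. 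When Schauder regularity (Lemma \ref{lem: D2 L inv}) is invoked for the $\nabla^2 L^{-1}$ term in \eqref{eq: 69}, Lemma \ref{lem: 2.4} supplies the needed $C^{0,\alpha}$ control of differences of composed functions. Estimate \eqref{eq: 32} follows by a shorter but analogous decomposition of the explicit formula for $\frac{\pa \F_3}{\pa \varphi}$.

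The main obstacle will be the bookkeeping: the many terms produced by chain-rule differentiations and the need to insert intermediate quantities force a careful case-by-case application of the catalogue of lemmas in Section 3. A subtler but essential point is the logarithmic loss $d(1+|\log d|)$ in Lemma \ref{lem: 1.5}, which can be absorbed into $C_\beta d^\beta$ for any $\beta<1$ but never into a Lipschitz estimate; this is precisely why \eqref{eq: 31} and \eqref{eq: 32} admit only Hölder exponent $\beta<1$ with constants $C_\beta$ that blow up as $\beta\to 1$. Finally, continuity with respect to the scalar parameters $\omega$ and $\epsilon$ is immediate since $\F$ is polynomial in $\omega^2$ and linear in $\epsilon$.
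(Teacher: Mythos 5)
Your proposal is correct and follows essentially the same route as the paper: (29) by citation to \cite{SW2017}, (30) by the explicit Leibniz computation on $K'(\varphi)\eta$, and (31)--(32) by inserting intermediate terms so each difference varies in only one argument, then dispatching the pieces with Lemmas \ref{lem: 1.2}, \ref{lem: 1.5}, \ref{lem: D2 L inv} and \ref{lem: 2.4} exactly as the paper does (including the observation that the $d(1+|\log d|)$ loss forces the H\"older exponent $\beta<1$). No substantive differences to report.
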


\begin{proof}  The inequality \eqref{eq: 29} was derived in Lemma 5.6 of \cite{SW2017}.  
We focus on the rest of the estimates, beginning with \eqref{eq: 30}. Recalling \eqref{eq: 17}, we have
\[
\begin{split}
&\pa_i \left[ \left( \frac{\pa \F_1}{\pa \varphi}  (\zeta_1,\varphi_1) - \frac{\pa \F_1}{\pa \varphi}  (\zeta_2,\varphi_2) \right) \eta\right] (x) \\
&= \epsilon( K''(\varphi_2(x)) \pa_i \varphi_2(x) -K''(\varphi_1(x)) \pa_i\varphi_1(x) ) \eta(x)  \\
&\quad+ \epsilon ( K'(\varphi_2(x)) - K'(\varphi_1(x)) )\pa_i \eta (x)
\end{split}
\] 
The second term in the right-hand side is bounded by 
$C\epsilon \|\varphi_1-\varphi_2\|_X \|\eta\|_X |x|$ 
since $|K'(\varphi_2(x)) - K'(\varphi_1(x))|$ is bounded by $ C\|\varphi_1-\varphi_2\|_X$. 
Rewriting the first term as 
$$  \epsilon( K''(\varphi_2(x)) (\pa_i \varphi_2(x) -\pa_i \varphi_1(x) ) 
+( K''(\varphi_2(x)) -K''(\varphi_1(x))) \pa_i\varphi_1(x) ) \eta(x), $$ 
we see that it is also bounded by $C\epsilon \|\varphi_1-\varphi_2\|_X \|\eta\|_X |x|$. Thus \eqref{eq: 30} holds.

As for \eqref{eq: 31}, we write 
\[
\left( \frac{\pa \F_2}{\pa \zeta}  (\zeta_1,\varphi_1) -  \frac{\pa \F_2}{\pa \zeta}  (\zeta_2,\varphi_2)\right) \xi  = J_1 + J_2 + J_3 +J_4,
\]
where 
\[ 
\begin{split}
J_1&= \epsilon [\M' (\zeta_2)\xi - \M'(\zeta_1)\xi ] \F_3(\zeta_1,\varphi_1) \\
J_2&= \epsilon [\M'(\zeta_2)\xi](\F_3(\zeta_2,\varphi_2)- \F_3(\zeta_1,\varphi_1)) \\
J_3&=\epsilon (\M(\zeta_2) - \M (\zeta_1)) \frac{\pa \F_3}{\pa \zeta}  (\zeta_1,\varphi_1) \xi \\
J_4&= \epsilon \M(\zeta_2)\left[  \frac{\pa \F_3}{\pa \zeta}  (\zeta_2,\varphi_2) \xi  -\frac{\pa \F_3}{\pa \zeta}  (\zeta_1,\varphi_1) \xi  \right]
\end{split}
\]
We will estimate the $X$ norm of each $J_i$, $i=1,2,3,4$. 
We start with $J_1$. The $X$ norm of $\F_3$ was shown to be bounded in Lemma \ref{lem: 2.1}.   
The estimate of $I_1$ in Lemma 5.6 of \cite{SW2017} 
shows $\|\M' (\zeta_2)\xi - \M'(\zeta_1)\xi \|_{L^\infty} \leq C \|\zeta_1-\zeta_2\|_X\|\xi \|_X$, 
and therefore we obtain $$\| J_1 \|_X \leq C \epsilon \|\zeta_1-\zeta_2\|_X\|\xi \|_X.$$ 

We next estimate $J_2$. By Lemma 5.5 of \cite{SW2017}, we have $\| \M'(\zeta_2)\xi \|_X\leq C \|\xi\|_X$. 
In order to estimate the $X$ norm of $\F_3(\zeta_2,\varphi_2)- \F_3(\zeta_1,\varphi_1)$, 
we rewrite its spatial derivative as 
\[
\begin{split}
&\pa_i ( \F_3(\zeta_2,\varphi_2)- \F_3(\zeta_1,\varphi_1) ) =\\
&  \quad
\{ \nabla L^{-1}[ \rho_0(\gztwo^{-1}) k(\varphi_2 (\gztwo^{-1}))  ](\gztwo(x)) - 
\nabla L^{-1}[ \rho_0(\gzone^{-1}) k(\varphi_2 (\gzone^{-1}))  ](\gztwo(x)) \} \pa_i \gztwo (x) \\
&+ \{ \nabla L^{-1}[ \rho_0(\gzone^{-1}) k(\varphi_2 (\gzone^{-1}))  ] (\gztwo(x)) - 
\nabla L^{-1}[ \rho_0(\gzone^{-1}) k(\varphi_2 (\gzone^{-1}))  ] (\gzone(x)) \} \pa_i \gztwo (x)\\
&+ \{ \nabla L^{-1}[ \rho_0(\gzone^{-1}) k(\varphi_2 (\gzone^{-1}))  ] (\gzone(x)) - 
\nabla L^{-1}[ \rho_0(\gzone^{-1}) k(\varphi_1 (\gzone^{-1}))  ] (\gzone(x)) \}\pa_i \gztwo (x)\\
&+ \nabla L^{-1}[ \rho_0(\gzone^{-1}) k(\varphi_1 (\gzone^{-1}))  ] (\gzone(x)) \pa_i  (\gztwo(x) -  \gzone(x) ) \\
&=: J_{21} + J_{22} + J_{23}+ J_{24}
\end{split}
\]
$J_{21}$ can be estimated 
by the representation \eqref{eq: 290} and Lemma \ref{lem: 1.5} with $u= \rho_0 k (\varphi_2)$. 
Since $ \rho_0 k (\varphi_2)$ is in $L^\infty$, we deduce that 
$
|J_{21}| \leq C_\beta \|\zeta_1 - \zeta_2\|_X^\beta |x|$ for any $0<\beta<1$. 
For $J_{22}$, we apply Lemma \ref{lem: 1.2} to deduce that 
$
|J_{22}| \leq C_\beta \|\zeta_1 - \zeta_2\|_X^\beta  |x| 
$ 
 for any $0<\beta<1$, as $\rho_0(\gzone^{-1})k(\varphi_2(\gzone^{-1}))\in C^1$.  
To estimate $J_{23}$, we note that 
$|k(\varphi_2(z)) -  k( \varphi_1(z))|\leq \|k'\|_{L^\infty} |\varphi_2(z)- \varphi_1 (z)| \leq C \|\varphi_1-\varphi_2\|_X$, 
which results in 
$
|J_{23}|\leq C \| \varphi_1-\varphi_2 \|_X |x|$ by Lemma \ref{lem: 1.2}.   
Moreover, we have $|J_{24}|\leq C\| \zeta_1 -\zeta_2\|_X|x|$, 
since $|\pa_i  (\gztwo(x) -  \gzone(x) )|\leq C\| \zeta_1 -\zeta_2\|_X$.  
To sum up, we deduce that 
$$\|J_2\|_X\leq C\epsilon (   \|\zeta_1 - \zeta_2\|_X^\beta+   \|\varphi_1-\varphi_2\|_X ) \|\xi\|_X $$ 
for any $0<\beta<1$. 

The estimation of $J_3$ is similar to that of $J_1$.  
From Lemma \ref{lem: 2.2} we deduce that $$\| J_3 \|_X \leq C \epsilon \|\zeta_1-\zeta_2\|_X\|\xi \|_X.$$

We now turn into $J_4$.  
Introducing some notation to denote the functions appearing in  \eqref{eq: 68}, \eqref{eq: 69}, \eqref{eq: 70}, 
we define 
\[
\begin{split}
v(\zeta,\varphi)&:= \left(\nabla\rho_0(\gz^{-1})k(\varphi(\gz^{-1}))+\rho_0(\gz^{-1})k'(\varphi(\gz^{-1}))\nabla \varphi(\gz^{-1})\right)\cdot  D \gz^{-1}\cdot\frac{\xi(\gz^{-1})\gz^{-1}}{|\gz^{-1}|^2} \\
w(\zeta,\varphi)&:= \rho_0(\gz^{-1}) k(\varphi(\gz^{-1}))
\end{split}
\]
Then we may write 
$\pa_i\left[  \frac{\pa \F_3}{\pa \zeta}  (\zeta_2,\varphi_2) \xi  -\frac{\pa \F_3}{\pa \zeta}  (\zeta_1,\varphi_1) \xi  \right]$ as   
\[
\begin{split}
&\;\;\pa_i\left[  \frac{\pa \F_3}{\pa \zeta}  (\zeta_2,\varphi_2) \xi  -\frac{\pa \F_3}{\pa \zeta}  (\zeta_1,\varphi_1) \xi  \right] \\
&=\nabla L^{-1}(v(\zeta_1,\varphi_1))(\gzone(x))\cdot \pa_i \gzone(x) -  \nabla L^{-1}(v(\zeta_2,\varphi_2))(\gztwo(x))\cdot \pa_i \gztwo(x) \\
&+ \left[\nabla^2 L^{-1} (w(\zeta_2,\varphi_2)) (\gztwo(x)) \pa_i \gztwo(x) -  \nabla^2 L^{-1} (w(\zeta_1,\varphi_1)) (\gzone(x)) \pa_i \gzone(x)\right] \cdot \frac{\xi(x)}{|x|^2}x\\
&+\left[ \nabla L^{-1} (w(\zeta_2,\varphi_2)) (\gztwo(x)) -\nabla L^{-1} (w(\zeta_1,\varphi_1)) (\gzone(x))  \right]\cdot \pa_i \left( \frac{\xi(x)}{|x|^2}x\right) \\
&=: J_{41} + J_{42} + J_{43}. 
\end{split}
\]
We will estimate each $J_{4i}$, $i=1,2,3$ separately.   
We start with $J_{41}$ following the same steps as  for $J_2$.  
Namely, we rewrite it as $J_{411}+ J_{412}+ J_{413}+ J_{414}$ (see $J_{21} + J_{22} + J_{23}+ J_{24}$) 
and estimate each term. The only novelty is that we have $v(\zeta,\varphi)$ 
instead of $ w(\zeta,\varphi)=\rho_0(\gz^{-1}) k(\varphi(\gz^{-1}))$. 
Consequently, we just use estimate \eqref{est: lem 1.5.2} instead of \eqref{est: lem 1.5.1}. 
We deduce that 
$$|J_{41}| \leq C_\beta(   \|\zeta_1 - \zeta_2\|_X^\beta+   \|\varphi_1-\varphi_2\|_X ) \|\xi\|_X|x| $$ for any $0<\beta<1$. 

For $J_{42}$, we rewrite the square bracket term as 
\[
\begin{split}
 [ \ \cdot \ ] &=\nabla^2 L^{-1} [ w(\zeta_2,\varphi_2) - w(\zeta_1,\varphi_2)] (\gztwo(x)) \pa_i \gztwo(x) \\
  &+  \nabla^2 L^{-1} [ w(\zeta_1,\varphi_2) - w(\zeta_1,\varphi_1)] (\gztwo(x)) \pa_i \gztwo(x) \\
  &+ \{ \nabla^2 L^{-1} [w(\zeta_1,\varphi_1))] (\gztwo(x)) - \nabla^2 L^{-1}[ w(\zeta_1,\varphi_1)] (\gzone(x))\} \pa_i \gztwo(x) \\
  &+ \nabla^2 L^{-1} [w(\zeta_1,\varphi_1)] (\gzone(x)) \pa_i ( \gztwo(x)  -\gzone(x)  ) \\
  &=: J_{421} +  J_{422} +  J_{423} +  J_{424}
\end{split}
\]
We focus first on $\nabla^2 L_1^{-1} [ w(\zeta_2,\varphi_2) - w(\zeta_1,\varphi_2)](z)$ in $J_{421}$. 
Since $\rho_0 k(\varphi_2) \in C^{0,1}$, Lemma \ref{lem: 1.5} yields  
$\| w(\zeta_2,\varphi_2) - w(\zeta_1,\varphi_2) \|_{C^{0,\alpha}} \leq C_\alpha \| \zeta_1-\zeta_2 \|_X^{1-\alpha}$ for any $0<\alpha<1$. Lemma \ref{lem: D2 L inv} then gives the bound   
$$
\|\nabla^2 L_1^{-1} ( w(\zeta_2,\varphi_2) - w(\zeta_1,\varphi_2))\|_{L^\infty}\leq C_\beta \| \zeta_1-\zeta_2 \|_X^{\beta}$$ 
for any  $0<\beta<1$. Similarly, in $J_{422}$ we also have 
$$\|\nabla^2 L_1^{-1} [w(\zeta_1,\varphi_2) - w(\zeta_1,\varphi_1)]\|_{L^\infty}\leq C_\beta \| \varphi_1-\varphi_2 \|_X^{\beta}$$ 
 for any  $0<\beta<1$.   
 The most serious term in $J_{423}$ is as follows. 
 Since $\rho_0 k(\varphi_1)\in C^{0,1}$, Lemma \ref{lem: D2 L inv} again gives 
\begin{align*}
&~|  \nabla^2 L_1^{-1} [w( \zeta_1,\varphi_1) ](\gztwo(x)) - (\nabla^2 L_1^{-1} [w( \zeta_1,\varphi_1) ] (\gzone(x))| \\
\leq &~C_\beta | \gzone(x) - \gztwo(x)|^\beta \leq C_\beta \|\zeta_1-\zeta_2\|_X^\beta
\end{align*}
for any $0<\beta<1$. Lastly, $|J_{424}|\leq C \| \zeta_1 -\zeta_2\|_X$ since $\nabla^2 L^{-1} [w(\zeta_1,\varphi_1)] (\gzone(x))$ is bounded and $ |\pa_i  (\gztwo(x) -  \gzone(x) )|\leq C\| \zeta_1 -\zeta_2\|_X$. Together with $ |\frac{\xi(x)}{|x|^2}x|\leq C\|\xi\|_X|x|$, we arrive at 
\[
|J_{42}|\leq C_\beta  (   \|\zeta_1 - \zeta_2\|_X^\beta+   \|\varphi_1-\varphi_2\|^\beta_X ) \|\xi\|_X|x|
\] 
for any $0<\beta<1$. 

The square bracket in $J_{43}$ can be  estimated in the same way as was done for 
$J_{21}$, $J_{22}$ and $J_{23}$. Together with $|\pa_i \left( \frac{\xi(x)}{|x|^2}x \right)|\leq C \|\xi\|_X$, we have  
\[
|J_{43}| \leq C_\beta (   \|\zeta_1 - \zeta_2\|_X^\beta+   \|\varphi_1-\varphi_2\|_X ) \|\xi\|_X|x| 
\]
 for any $0<\beta<1$. Combining the estimates of $J_{41},$  $J_{42}$ and  $J_{43}$, we find that 
 \[
 \|J_4\|_X \leq C_\beta \epsilon ( \|\zeta_1 - \zeta_2\|_X^\beta+   \|\varphi_1-\varphi_2\|^\beta_X) \|\xi\|_X
 \]
for any $0<\beta<1$. This completes the proof of \eqref{eq: 31}.

It remains to show \eqref{eq: 32}. Note that 
\[
\begin{split}
\left( \frac{\pa \F_2}{\pa \varphi}  (\zeta_1,\varphi_1)- \frac{\pa \F_2}{\pa \varphi}  (\zeta_2,\varphi_2)\right) \eta = \epsilon (\M(\zeta_2) - \M (\zeta_1)) \frac{\pa \F_3}{\pa \varphi}  (\zeta_1,\varphi_1) \eta \\
+ \epsilon \M(\zeta_2)\left[  \frac{\pa \F_3}{\pa \varphi}  (\zeta_2,\varphi_2) \eta  -\frac{\pa \F_3}{\pa \varphi}  (\zeta_1,\varphi_1) \eta \right] =: K_1+ K_2
\end{split}
\]
Each factor of $K_1$ has been previously estimated and it is easy to see that 
\[
\|K_1\|_X \leq C\epsilon \|\zeta_1 - \zeta_2\|_X \|\eta\|_X. 
\]
The other term $K_2$ can be estimated in the same spirit as for $J_2$, for instance.  We obtain   
\[
\| K_2\|_X \leq C_\beta \epsilon (   \|\zeta_1 - \zeta_2\|_X^\beta+   \|\varphi_1-\varphi_2\|_X ) \|\eta\|_X 
\]
for any $0<\beta<1$. We omit the details. This completes the proof of the proposition. 
\end{proof}

Before proving that the formal derivatives are genuine Fr\'echet derivatives, 
we need a technical lemma that establishes the equality of the mixed partial derivatives of the functions
$$ G_1(x,s)=\F(\zeta+s\xi, \varphi)(x) 
=(\F_1(\zeta+s\xi, \varphi)(x),\F_2(\zeta+s\xi, \varphi)(x))^t  $$
and
$$ G_2(x,s) = \F(\zeta,\varphi+s\eta)(x) 
=(\F_1(\zeta,\varphi+s\eta)(x),\F_2(\zeta,\varphi+s\eta)(x))^t  $$ 
This will of course be true if the $G_1$, $G_2$ are $C^2$. However, rather than showing such regularity, we make a direct computation.

\begin{lemma}\label{lem: mixed partial}
Let $G_1$, $G_2$ be defined as above, where $\zeta,\varphi\in X_\delta$ for $\delta>0$ sufficiently small, 
and $\xi,\eta\in X$ are such that $\zeta+s\xi,\varphi+s\eta\in X_\delta$ for all $|s|\leq 1$. 
Then 
\eqn\label{eq: mixed partial}
\partial_i\partial_s G_j(x,s)=\partial_s\partial_iG_j(x,s)
\eeqn
for all $x\in \overline{B_1}$, all $|s|\le 1$ and $j=1,2$.
\end{lemma}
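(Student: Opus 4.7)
The plan is to verify \eqref{eq: mixed partial} termwise, treating each summand that makes up $\F_1$ and $\F_2$ separately, and for each one computing both iterated partials $\partial_i \partial_s$ and $\partial_s \partial_i$ directly and checking they coincide. Since all the ingredients depend on $s$ only through the smooth affine substitutions $\zeta \mapsto \zeta + s\xi$ and $\varphi \mapsto \varphi + s\eta$, the $s$-derivative is essentially a directional derivative, which makes the direct computation tractable.

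First I would dispose of the purely algebraic pieces. The terms $\tfrac12 \omega^2 (x_1^2+x_2^2)(1+(\zeta+s\xi)/|x|^2)^2$, $h(\M(\zeta+s\xi)\rho_0(x))$, $K((\varphi+s\eta)(x))$ and the constants evaluated at the origin are $C^\infty$ in $s$ and continuously differentiable in $x$, and all mixed partials are continuous in $(x,s)$ on $\dot B_1 \times [-1,1]$. Thus classical Schwarz applies and \eqref{eq: mixed partial} holds for these pieces. For the gravitational convolution $(\rho_{\zeta+s\xi}*|\cdot|^{-1})(g_{\zeta+s\xi}(x))$, I would change variables to push the $s$-dependence off of $\rho_0$ and onto the kernel and the Jacobian, writing the expression as an integral over the fixed domain $B_1$ of the form $\int_{B_1} \rho_0(y)\, \Phi(x,y,s)\, dy$ where $\Phi$ is smooth in $(x,s)$ away from $y = g_{\zeta+s\xi}^{-1}(g_{\zeta+s\xi}(x)) = x$, with singularities of order $|x-y|^{-1}$ for $\Phi$, $|x-y|^{-2}$ for $\nabla_x \Phi$, $|x-y|^{-2}$ for $\partial_s \Phi$, and $|x-y|^{-3}$ for $\partial_i \partial_s \Phi$. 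Since we are in three dimensions and the three-dimensional integral $\int |x-y|^{-3}\,dy$ is borderline, one removes the singularity via integration by parts (transferring a derivative onto $\rho_0 \in C^{1,\alpha}$), after which dominated convergence justifies differentiating under the integral in either order and the two orders agree because the integrand's mixed partials are equal almost everywhere.

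For the $L^{-1}$ pieces in $\F_2$, I would use the five-dimensional representation \eqref{eq: 290}, which again moves all the $s$-dependence into the kernel, the diffeomorphism $\widetilde{g_{\zeta+s\xi}}$, and $k(\varphi + s\eta)$, leaving the integration over a fixed five-dimensional ball. The singularity of the kernel is of order $|\,\cdot\,|^{-3}$ in $\mathbb{R}^5$, and each $x$-derivative or $s$-derivative raises the order by one; the critical $\partial_i \partial_s$ kernel is then of order $|\,\cdot\,|^{-5}$, which is again borderline in $\mathbb R^5$. I would handle this exactly as in the Newtonian case: for the singular piece, integrate by parts once using that $\rho_0(g_\zeta^{-1})k(\varphi(g_\zeta^{-1}))$ lies in $C^{0,\alpha}$ uniformly in $s$ (by Lemma \ref{lem: 2.4} and the chain rule, using $\rho_0 \in C^{1,\alpha}$ and $\varphi \in X \subset C^1$), reducing to an $|\,\cdot\,|^{-4}$ kernel where dominated convergence applies. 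The multiplicative prefactor $C_5(z_1^2+z_2^2)$ in \eqref{eq: 60} is smooth and commutes with all the derivatives, so it poses no difficulty.

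The main obstacle I anticipate is this last point: rigorously justifying the interchange of the $\partial_i$ and $\partial_s$ limits in the $L^{-1}$ integrals with their near-critical singularities, and verifying that the boundary terms from the integration by parts vanish or match up on the two iterated-partial computations. Once that is in hand, the equality of the two orders of differentiation is immediate from Fubini and Schwarz's theorem applied pointwise in the integrand. Assembling the pieces yields \eqref{eq: mixed partial} for both $j=1,2$ and all $x\in \overline{B_1}$, $|s|\le 1$.
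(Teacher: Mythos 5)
Your overall framework (differentiate under the integral in both orders, then apply Schwarz/Fubini to the integrands) is viable in principle, but it is not the paper's route, and the one step you yourself flag as the main obstacle is exactly the step you leave open --- and your proposed tool for it does not quite work as stated. You cannot ``integrate by parts once using that $\rho_0(g_\zeta^{-1})k(\varphi(g_\zeta^{-1}))$ lies in $C^{0,\alpha}$'': integration by parts requires a genuine (weak) derivative, not H\"older continuity. What saves you is that the composed density is in fact Lipschitz, hence $W^{1,\infty}$ (since $\rho_0\in C^{1,\alpha}$, $k\in C^2$, $\varphi\in C^1$ and $g_\zeta^{-1}$ is bi-Lipschitz), or alternatively one uses the classical potential-theoretic subtraction $u(y)-u(y_*)=O(|y-y_*|^\alpha)$ to tame the borderline kernel; either way this needs to be said correctly, and the subsequent verification that both iterated limits exist and are jointly continuous (or that the two iterated integrals coincide via Fubini) is the entire content of the lemma, not a footnote.

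The paper avoids the critical kernel altogether. For the $\F_1$ part it simply invokes Lemma 5.4 of \cite{SW2017}. For the $L^{-1}$ part it differentiates in $x$ \emph{first}: $\partial_i G_3(x,s)=\nabla L^{-1}[\rho_0(g_{\zeta+s\xi}^{-1})k(\varphi(g_{\zeta+s\xi}^{-1}))](g_{\zeta+s\xi}(x))\cdot\partial_i g_{\zeta+s\xi}(x)$, which only needs $\nabla L^{-1}$ of a bounded function to be continuous (Lemma \ref{lem: 1.1}). It then applies $\partial_s$ to this composition by the ordinary chain rule for $G_4(z,s)=\nabla L^{-1}[\cdots](z)$ evaluated at $z=g_{\zeta+s\xi}(x)$: the $z$-derivative $\nabla^2L^{-1}[\cdots]$ exists and is continuous by the Schauder estimate (Lemma \ref{lem: D2 L inv}) because the density is H\"older, and the $s$-derivative passes inside $\nabla L^{-1}$ by dominated convergence because the relevant kernels ($|\cdot|^{-3}$ and $|\cdot|^{-4}$ in $\real^5$) are subcritical and $\partial_s$ of the density is bounded. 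The resulting expression is visibly \eqref{eq: 68}--\eqref{eq: 70}, i.e.\ the already-computed $\partial_i\partial_s G_3$, so equality of the mixed partials is read off rather than extracted from a second differentiation under a near-divergent integral. In short: elliptic regularity does the work that your plan assigns to a delicate singular-integral interchange. If you want to keep your route, replace the $C^{0,\alpha}$ integration by parts with the Lipschitz/$W^{1,\infty}$ version (checking that $D(g_\zeta^{-1})$, which is only $L^\infty$, causes no trouble) and actually carry out the continuity-of-mixed-partials verification; otherwise the cleaner path is the paper's.
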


\begin{proof}
We only show the proof for $G_1$, as $G_2$ is similar.  
For the $\F_1$ part of $G_1$, \eqref{eq: mixed partial} is an immediate consequence 
of Lemma 5.4 of \cite{SW2017}.   For $\F_2$, we only focus on the part 
\eqn
G_3(x,s)=\F_3(\zeta+s\xi,\varphi)(x)=L^{-1}[\rho_0(g_{\zeta+s\xi}^{-1})k(\varphi(g_{\zeta+s\xi}^{-1}))](g_{\zeta+s\xi}(x)).
\eeqn
In the following, to simplify notation, we plug in $s=0$ after taking the derivative $\partial_s$. The general case will be similar but has more clumsy notation. $\partial_i\partial_sG_3(x,0)$ is given by \eqref{eq: 68}, \eqref{eq: 69} and \eqref{eq: 70}. To find the other mixed partial derivative, we compute 
\begin{align}
\partial_i G_3(x,s) = \nabla L^{-1}[\rho_0(g_{\zeta+s\xi}^{-1})k(\varphi(g_{\zeta+s\xi}^{-1}))](g_{\zeta+s\xi}(x))\cdot \partial_ig_{\zeta+s\xi}(x).
\end{align}
If we let the $\partial_s$ derivative fall on the various terms as follows
\begin{align}
&~\nabla L^{-1}\{\partial_s[\rho_0(g_{\zeta+s\xi}^{-1})k(\varphi(g_{\zeta+s\xi}^{-1}))]\}(g_{\zeta+s\xi}(x))\cdot \partial_ig_{\zeta+s\xi}(x)\\
&~+\nabla^2 L^{-1}[\rho_0(g_{\zeta+s\xi}^{-1})k(\varphi(g_{\zeta+s\xi}^{-1}))](g_{\zeta+s\xi}(x))\cdot \partial_s G_{\zeta+s\xi}(x)\cdot \partial_ig_{\zeta+s\xi}(x)\\
&~+\nabla L^{-1}[\rho_0(g_{\zeta+s\xi}^{-1})k(\varphi(g_{\zeta+s\xi}^{-1}))](g_{\zeta+s\xi}(x))\cdot\partial_s \partial_ig_{\zeta+s\xi}(x),
\end{align}
we will recover \eqref{eq: 68}, \eqref{eq: 69} and \eqref{eq: 70}, and the proof will be complete. 
To justify the calculation above, we define
\eqn
G_4(z,s) = \nabla L^{-1}[\rho_0(g_{\zeta+s\xi}^{-1})k(\varphi(g_{\zeta+s\xi}^{-1}))](z)
\eeqn
and want to show that 
\begin{enumerate}[(a)]
\item $\partial_sG_4(z,s) = \nabla L^{-1}\{\partial_s[\rho_0(g_{\zeta+s\xi}^{-1})k(\varphi(g_{\zeta+s\xi}^{-1}))]\}(z)$.
\item $\partial_zG_4(z,s) = \nabla^2 L^{-1}[\rho_0(g_{\zeta+s\xi}^{-1})k(\varphi(g_{\zeta+s\xi}^{-1}))](z)$.
\item Both $\partial_sG_4(z,s)$ and $\partial_zG_4(z,s)$ are continuous. \end{enumerate}
The calculation will therefore be justified by the chain rule. 

Since $\partial_s[\rho_0(g_{\zeta+s\xi}^{-1})k(\varphi(g_{\zeta+s\xi}^{-1}))]$ is bounded, (a) follows directly from the dominated convergence theorem. (b) is obvious. The continuity of $\partial_sG_4(z,s)$ and $\partial_zG_4(z,s)$ can be proven by the same kind of estimates employed in the proof of Lemma \ref{lem: continuity Frechet}. 
We omit the straightforward details.
\end{proof}

We finally show that the formal derivatives are genuine Fr\'{e}chet derivatives. 
\begin{lemma}\label{lem: Frechet}
Let $\zeta,\varphi\in X_\delta$ where $\delta>0$ is sufficiently small.  
There exist $\delta_1>0$, $0<\alpha<1$ and $C>0$ such that if $\|\xi\|_X,\|\eta\|_X<\delta_1$, then 
\eqn  
\label{eq: lem 2.5.1}
\left\|\F(\zeta+\xi,\varphi+\eta)-\F(\zeta,\varphi)-\frac{\partial \F}{\partial(\zeta,\varphi)}(\xi,\eta)\right\|_{X^2}   
\le C(\|\xi\|_X+\|\eta\|_X)^{1+\alpha},
\eeqn 
where $\frac{\partial \F}{\partial(\zeta,\varphi)}$ denotes the formal derivative defined in \eqref{def: formal deriv}.
\end{lemma}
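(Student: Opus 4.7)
My plan is to apply the fundamental theorem of calculus along the linear segment $s\mapsto(\zeta+s\xi,\varphi+s\eta)$ for $s\in[0,1]$, using the pointwise formal derivatives from \eqref{def: formal deriv} in place of a classical $s$-derivative. For each fixed $x\in\overline{B_1}$, the chain rule together with \eqref{def: formal deriv} yields
\begin{equation*}
\frac{d}{ds}\F(\zeta+s\xi,\varphi+s\eta)(x) = \left[\frac{\partial\F}{\partial\zeta}(\zeta+s\xi,\varphi+s\eta)\xi + \frac{\partial\F}{\partial\varphi}(\zeta+s\xi,\varphi+s\eta)\eta\right](x).
\end{equation*}
Integrating from $0$ to $1$ and subtracting the formal derivative at $(\zeta,\varphi)$ identifies the quantity inside the norm of \eqref{eq: lem 2.5.1} with $\int_0^1 E(s)(x)\,ds$, where
\begin{equation*}
E(s) := \left[\frac{\partial\F}{\partial(\zeta,\varphi)}(\zeta+s\xi,\varphi+s\eta) - \frac{\partial\F}{\partial(\zeta,\varphi)}(\zeta,\varphi)\right](\xi,\eta).
\end{equation*}

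Next I would estimate the $X^2$-norm of this integral by pulling $\partial_i$ inside the $s$-integral and then applying Lemma \ref{lem: continuity Frechet}. Fix some $\beta\in(0,1)$, let $\alpha_0$ be the exponent appearing in \eqref{eq: 29}, and set $\alpha:=\min(\alpha_0,\beta)\in(0,1)$. Applying \eqref{eq: 29}--\eqref{eq: 32} with $(\zeta_1,\varphi_1)=(\zeta+s\xi,\varphi+s\eta)$ and $(\zeta_2,\varphi_2)=(\zeta,\varphi)$, and using $\|(\zeta+s\xi)-\zeta\|_X=s\|\xi\|_X$ together with $\|(\varphi+s\eta)-\varphi\|_X=s\|\eta\|_X$, each component is controlled by $C s^\alpha (\|\xi\|_X+\|\eta\|_X)^\alpha$ multiplied by either $\|\xi\|_X$ or $\|\eta\|_X$. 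Hence
\begin{equation*}
\|E(s)\|_{X^2}\le C\, s^{\alpha}\, (\|\xi\|_X+\|\eta\|_X)^{1+\alpha},
\end{equation*}
and the bound $\int_0^1 s^\alpha\,ds=1/(1+\alpha)$ gives the desired remainder estimate.

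The principal obstacle is legitimizing the interchange of the spatial derivative $\partial_i$ (implicit in the $X$-norm) with the $s$-integration, because $\F$ is built from composition with $\gz^{-1}$, nonlocal convolutions and the integral operator $L^{-1}$, so neither the $s$-derivative nor the spatial derivative is obviously regular enough to swap with the other. This is precisely what Lemma \ref{lem: mixed partial} supplies: the mixed partials $\partial_i\partial_s G_j$ and $\partial_s\partial_i G_j$ both exist and agree along the segment, and the continuity of these partials (coming from Lemma \ref{lem: mixed partial} together with the estimates in Lemma \ref{lem: continuity Frechet}) lets us invoke the classical differentiation-under-the-integral theorem. Once this interchange is in hand, the rest of the argument reduces to the triangle inequality and the H\"older bounds already established, and the lemma follows.
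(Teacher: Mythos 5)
Your proposal is correct and follows essentially the same route as the paper: both arguments reduce the remainder to the difference of formal derivatives along the segment, justify the interchange of $\partial_i$ with the $s$-differentiation via Lemma \ref{lem: mixed partial}, and then apply the H\"older continuity estimates of Lemma \ref{lem: continuity Frechet}. The only (immaterial) difference is that you express the remainder as an integral $\int_0^1 E(s)\,ds$ and differentiate under the integral sign, whereas the paper applies the mean value theorem to $\partial_i\F$ and evaluates at an intermediate point $s=\theta(x)$.
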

\begin{proof}
In order to estimate the $X$ norm on the left hand side of \eqref{eq: lem 2.5.1}, we compute the spatial derivatives by 
\begin{align}
&~\partial_i[\F(\zeta+\xi,\varphi+\eta)-\F(\zeta,\varphi)](x) \notag\\
=&~ \partial_i\F(\zeta+s\xi,\varphi+s\eta)(x)\bigg|_{s=0}^{s=1}\notag\\
=&~ \partial_s\partial_i\F(\zeta+s\xi,\varphi+s\eta)(x)\bigg|_{s=\theta(x)}\notag\\
=&~\partial_i\partial_s\F(\zeta+s\xi,\varphi+s\eta)(x)\bigg|_{s=\theta(x)}\notag\\
=&~\partial_i \left[\left(\frac{\partial\F}{\partial (\zeta,\varphi)}(\zeta+s\xi,\varphi+s\eta)\right)(\xi,\eta)\right](x)\bigg|_{s=\theta(x)},
\end{align}
where $0<\theta(x)<1$.   
We have used Lemma \ref{lem: mixed partial} to exchange the order of mixed partial derivatives. 
It follows that
\begin{align}
&~ \partial_i\left[\F(\zeta+\xi,\varphi+\eta)-\F(\zeta,\varphi)-\frac{\partial \F}{\partial(\zeta,\varphi)}(\xi,\eta)\right](x)\label{eq: lem 2.5.2}\\
=&~\partial_i\left[\left(\frac{\partial\F}{\partial (\zeta,\varphi)}(\zeta+s\xi,\varphi+s\eta)-\frac{\partial \F}{\partial(\zeta,\varphi)}(\zeta,\varphi)\right)(\xi,\eta)\right](x)\bigg|_{s=\theta(x)}. \notag
\end{align}
Using Lemma \ref{lem: continuity Frechet}, the components of \eqref{eq: lem 2.5.2} are bounded by
\begin{align*}
&~C(\|s\xi\|_X+\|s\eta\|_X)^\alpha(\|\xi\|_X+\|\eta\|_X)|x|\bigg|_{s=\theta(x)}\\
\le &~ C(\|\xi\|_X+\|\eta\|_X)^{1+\alpha}|x|.
\end{align*}
The estimate \eqref{eq: lem 2.5.1} thus follows.
\end{proof}

Since the dependence of $\F$ on $\omega$ and $\epsilon$ is very simple, 
we easily see that $\F$ is continuously Fr\'echet differentiable with respect to these two variables as well. 
Thus Theorem \ref{Frechet theorem} immediately follows from Lemma \ref{lem: Frechet}.

\

\noindent{\bf Acknowledgments.}  
JJ is supported in part by NSF grant DMS-1608494. 
YW is supported in part by NSF grant DMS-1714343.  
We acknowledge the support of the spring 2017 semester program at ICERM (Brown U.) where this work was begun. We would also like to thank the anonymous referee for valuable comments which have improved the presentation of the paper.

\end{document}